\markboth{\thepage}{\thepage}
\DeclareSymbolFont{symbolsC}{U}{txsyc}{m}{n}
\DeclareMathSymbol{\notniFromTxfonts}{\mathrel}{symbolsC}{61}
\newtheorem{thm}{Theorem}
\newtheorem{lem}[thm]{Lemma}
\newtheorem{dfn}[thm]{Definition}
\newtheorem{exm}{Example}
\newcommand{\bea}{\begin{eqnarray}}
\newcommand{\eea}{\end{eqnarray}}
\newcommand{\bean}{\begin{eqnarray*}}
\newcommand{\eean}{\end{eqnarray*}}
\newcommand{\beq}{\begin{equation}}
\newcommand{\eeq}{\end{equation}}
\newcommand{\bac}{\begin{array}{c}}
\newcommand{\ball}{\begin{array}{ll}}
\newcommand{\ea}{\end{array}}
\newcommand{\bbR}{{\mathbb R}}
\newcommand{\OO}{{\mathcal O}}
\def\({\left(}
\def\){\right)}
\def\bl{\left\{}
\def\br{\right\}}
\def\ml{\left|}
\def\mr{\right|}
\def\wt{\widetilde}
\def\ep{\varepsilon}
\def\dia{{\rm diam}}
\def\ul{\underline}
\begin{document}

\title{On the Whitney distortion extension problem for $C^m(\mathbb R^n)$ and $C^{\infty}(\mathbb R^n)$ and its applications to interpolation and alignment of data in $\mathbb R^n$}
\author{Steven B. Damelin \thanks{Mathematical Reviews, 416 Fourth Street, Ann Arbor, MI 48103, USA\, email: damelin@umich.edu} \and Charles Fefferman, \thanks{Department of Mathematics; Fine Hall, Washington Road, Princeton NJ 08544-1000 USA,\, email: cf@math.princeton.edu}}
\date{}
\maketitle

\thispagestyle{empty}
\parskip=10pt

%%%%%%%%%% Abstract %%%%%%%%%%
\begin{abstract}
Let $n,m\geq 1$, $U\subset\mathbb R^n$ open. In this paper we provide a sharp solution to the following Whitney distortion extension problems: 
(a)  Let $\phi:U\to \mathbb R^n$ be a $C^m$ map. If $E\subset U$ is compact (with some geometry) and the restriction of $\phi$ to $E$ is an almost isometry 
with small distortion,  how to decide when there exists a $C^m(\mathbb R^n)$ one-to-one and onto almost isometry $\Phi:\mathbb R^n\to \mathbb R^n$ with small distortion
which agrees with $\phi$ in a neighborhood of $E$ and a Euclidean motion $A:\mathbb R^n\to \mathbb R^n$ away from $E$.  (b)  Let $\phi:U\to \mathbb R^n$ be $C^{\infty}$ map. 
If $E\subset U$ is compact (with some geometry) and the restriction of $\phi$ to $E$ is an almost isometry with small distortion, how to decide when there exists a $C^{\infty}(\mathbb R^n)$ 
one-to-one and onto almost isometry $\Phi:\mathbb R^n\to \mathbb R^n$ with small distortion
which agrees with $\phi$ in a neighborhood of $E$ and a Euclidean motion $A:\mathbb R^n\to \mathbb R^n$ away from $E$.
Our results complement those of \cite{FD1,FD3} where there, $E$ is a finite set. In this case, the problem above is also a problem of interpolation and alignment of data in $\mathbb R^n$. 
The work in this paper appears in the research memoir \cite{SDam}.
\end{abstract}
\vskip 0.1in

%%%%%%%%%% Keywords and Phases %%%%%%%%%%
\noindent Keywords and Phrases: Smooth Extension, Whitney Extension, Isometry, Almost Isometry, Diffeomorphism, Small Distortion, Interpolation, Euclidean Motion, Procrustes problem, 
Whitney Extension in $\mathbb R^n$, Interpolation, 
Alignment of data in $\mathbb R^n$, $C^m(\mathbb R^n)$, $C^{\infty}(\mathbb R^n)$.
\medskip

\noindent AMS-MSC Classification: 58C25, 42B35, 94A08, 94C30, 41A05, 68Q25, 30E05, 26E10, 68Q17, 
\vskip 0.2in
\tableofcontents

\section{Whitney's extension problem.}
\setcounter{thm}{0}
\setcounter{equation}{0}
We will work here and throughout in Euclidean space $\mathbb R^n$, $n\geq 1$  $|.|$ will denote the Euclidean norm on $\mathbb R^n$.
\medskip

{\it Question 1}\, Let $m\geq 1$ and let $\phi:E\to \mathbb R$ be a function defined on an arbitrary set $E\subset \mathbb R^n$. How can one decide whether $\phi$ extends to a function
 $\Phi:\mathbb R^n\to \mathbb R$ which agrees with $\phi$ on $E$ and is in $C^m(\mathbb R^n)$, the space of functions from $\mathbb R^n$ to $\mathbb R$ whose derivatives of order 
$m$ are continuous and bounded.

For $n=1$ and $E$ compact, this is the well-known Whitney extension problem. See \cite{W,W1,W2}. Continued progress on this problem was made by G. Glaeser \cite{Gl}, Y. Brudnyi and P. Shvartsman \cite{B,BS1,BS2,BS3,BS4} and E. Bierstone, P. Milman and W. Pawlucki \cite{BMP}. See 
also N. Zobin \cite{Z,Z1}, B. Klartag and N. Zobin \cite{KZ} and 
E. LeGryer \cite{Le} for related work. C. Fefferman and his collaborators, A. Israel, B. Klartag, G. Lui, S. Mitter and H. Narayanan \cite{F1,F3,F4,F5,F6,KF,KF1,KF2,KF3, FIL1,FIL2,FIL3,FIL4,FIL5,FMH,I} have given a complete solution to this problem and have extended and 
generalized it in many ways. 

\subsection{Whitney's distortion extension problem for $C^m(\mathbb R^n)$ and $C^{\infty}(\mathbb R^n)$.}

In this paper, following from the work of \cite{DF4,FD1,FD3},  we are interested in studying the following Whitney extension problems which henceworth we will call {\it Whitney distortion extension problems for $C^m(\mathbb R^n)$ and $C^{\infty}(\mathbb R^n)$.} These are given in: 
\medskip

{\it Question 2}\, (a)  Let $U\subset\mathbb R^n$ be open and $\phi:U\to \mathbb R^n$ be a $C^m$ map. If $E\subset U$ is compact (with some geometry) and the restriction of $\phi$ to $E$ is an almost isometry with small distortion,  how to decide when there exists a $C^m(\mathbb R^n)$ one-to-one and onto almost isometry $\Phi:\mathbb R^n\to \mathbb R^n$ with small distortion
which agrees with $\phi$ in a neighborhood of $E$.  (b)  Let $\phi:U\to \mathbb R^n$ be $C^{\infty}$ map. If $E\subset U$ is compact (with some geometry) and the restriction of $\phi$ to $E$ is an almost isometry with small distortion, how to decide when there exists a $C^{\infty}(\mathbb R^n)$ one-to-one and onto almost isometry $\Phi:\mathbb R^n\to \mathbb R^n$ with small distortion
which agrees with $\phi$ in a neighborhood of $E$. Our results complement those of \cite{FD1,FD3} where there, $E$ is a finite set. In this case, as we will explain in detail below, the problem above is also a problem of interpolation and alignment of data in $\mathbb R^n$. In this paper, we will provide a sharp solution to Question 2 in the form of our main result which is given in our next  section.

\section{Main result of this paper.}

%%%%%%%%%% Section: Statement of Result %%%%%%%%%%

\setcounter{thm}{0}
\setcounter{equation}{0}

In this section, we state our main result which provides a sharp answer to Question 2. We need some quick notation and assumptions.

\subsection{Notations and assumptions.}

Unless stated otherwise, we will henceforth work with a $C^1$ map $\phi:U\to \mathbb R^n$ where $U\in \mathbb R^n$ is open. Let $E\subset U$ be compact. 
For $x \in \bbR^{n}$, we write $d(x)$ to denote the distance from $x$ to $E$. We write $\dia E$ to denote the diameter of $E$ and ${\rm card}(E)$ to denote the cardinality of $E$. Let $\ep>0$ be given. We make the following
\phantomsection
\subsection*{\textsc{\ul{Assumptions:}}}
\addcontentsline{toc}{subsection}{Assumptions}
\begin{enumerate}[leftmargin=*, label={(\thesection.\arabic*)}]
\item \label{i:0.2} (Geometry of E). For certain positive constants $c_{0}$, $C_{1}$, $c_{2}$ depending on $n$ such that the following holds, \\
Let $x \in \bbR^{n} \backslash E$. If $d(x) \leq c_{0} \, \dia E$, then there exists a ball $B(z,r) \subset E$ such that $\ml z-x \mr \leq C_{1} \, d(x)$ and $r \geq c_{2} \, d(x)$.
\item \label{i:0.3} (Small $\ep$ assumption). $\ep$ is less than a small enough constant determined by $c_{0}$, $C_{1}$, $c_{2}$, $n$. 
\item \label{i:0.1} (Geometry of $\phi$). For $x,y \in E$ we have $(1-\ep) \ml x-y \mr \leq \ml \phi(x) - \phi(y) \mr \leq (1+\ep) \ml x-y \mr$.
\end{enumerate}

Henceforth, by a Euclidean motion, we shall mean a map $A:x\to Tx+x_0$ from $\mathbb R^n$ to $\mathbb R^n$ with $T\in O(n)$ or 
$T\in SO(n)$ and $x_0\in \Bbb R^n$ fixed. $O(n)$ and $SO(n)$ will denote the orthogonal and special orthogonal groups respectively. A 
Euclidean motion $A$ will be called "proper" if $T\in SO(n)$ otherwise it is called improper. $A_{\infty}$ will always denote the identity Euclidean motion.
\footnote{$O(n)$ is by definition the group of isometries of $\mathbb R^n$ which preserve a fix point and $SO(n)$ the subgroup of $O(n)$ of orthogonal matrices of determinant 1. The Euclidean motions of $\mathbb R^n$ are the elements of the symmetry group of $\mathbb R^n$, ie 
all isometries of $\mathbb R^n$.} 

We will prove as our main result:

%%% Theorem 0.1 %%%
\begin{thm}
	\label{t:0.1}
	\addcontentsline{toc}{subsection}{Theorem 0.1}
	Under the above assumptions, there exists a $C^{1}$ map $\Phi: \bbR^{n} \to \bbR^{n}$ and a Euclidean motion $A_{\infty}: \bbR^{n} \to \bbR^{n}$, with the following properties,
\end{thm}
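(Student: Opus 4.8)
The plan is to build $\Phi$ by a Whitney-type partition-of-unity construction that agrees with $\phi$ on $E$ and relaxes to a single Euclidean motion $A_\infty$ away from $E$; the motion $A_\infty$ is extracted from a quantitative rigidity analysis of $\phi$ on the fat balls supplied by \ref{i:0.2}, and the asserted properties (extension, two-sided Lipschitz/diffeomorphism bounds, and closeness $\ml\Phi(x)-A_\infty(x)\mr\le C\ep\,\dia E$ with $C$ depending only on $c_0,C_1,c_2,n$) will come out of this.

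First, local rigidity. For each ball $B(z,r)\subset E$ furnished by \ref{i:0.2}, hypothesis \ref{i:0.1} forces every singular value of the continuous matrix $D\phi(u)$ to lie in $[1-\ep,1+\ep]$ on $B(z,r)$, so $\dst(D\phi(u),O(n))\le C\ep$ pointwise there; a quantitative stability-of-isometries estimate on a ball (in the spirit of F.\ John's rotation--strain bounds) then yields a Euclidean motion $A_{z,r}(u)=T_{z,r}u+b_{z,r}$, with $T_{z,r}$ in a fixed component of $O(n)$ (the sign of $\det D\phi$ being locally constant), such that $\ml\phi(u)-A_{z,r}(u)\mr\le C\ep\,r$ on $B(z,r)$ and $\nl D\phi(u)-T_{z,r}\nr\le C\ep$ on the concentric half-ball, $C=C(n)$. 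Since by \ref{i:0.2} (applied to a point at distance $\tfrac12 c_0\dia E$ from $E$) the set $E$ contains a ball $B^\ast$ of radius $\ge c_0c_2\dia E$, and since $E$ is approached by fat balls shrinking to each of its boundary points, continuity of $D\phi$ upgrades this to $\dst(D\phi(u),O(n))\le C\ep$ for every $u\in E$.

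Next, and this is the crux, one globalizes the motion. Set $A_\infty:=A_{B^\ast}$, with linear part $T_\infty$. For $q\in E$ choose an $n$-dimensional ``fat'' simplex $p_0,\dots,p_n\in B^\ast$ with edges $\sim\dia E$; because $\ml q-p_i\mr\le\dia E$, which is at most a bounded multiple (depending on $c_0,c_2$) of the radius of $B^\ast$, the trilateration map $y\mapsto(\ml y-A_\infty(p_i)\mr)_{i=0}^n$ is well conditioned near $A_\infty(q)$, with conditioning depending only on $c_0,c_2,n$. Feeding in $\ml\phi(q)-\phi(p_i)\mr=\ml q-p_i\mr+O(\ep\,\dia E)$ together with $\ml\phi(p_i)-A_\infty(p_i)\mr\le C\ep\,\dia E$ gives $\ml\phi(q)-A_\infty(q)\mr\le C\ep\,\dia E$ for all $q\in E$; comparing $A_\infty$ with the local motions $A_{z,r}$ on the fat balls then also forces $\nl D\phi(u)-T_\infty\nr$ to be small on $E$. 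It is precisely here that the hypotheses on the geometry of $E$ are used quantitatively, and here that one must pin down the exact dependence of the losses on $\ep$ and on $c_0,C_1,c_2,n$ — I expect this step, rather than the extension itself, to be the main obstacle.

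Finally, the extension. Pass to $\psi:=A_\infty^{-1}\circ\phi$, which is $C^1$ on $U$ and, on $E$, a small $C^1$ perturbation of the identity (small in $C^0$ after rescaling by $\dia E$); write $\psi=\mathrm{id}+g$ on $E$. Whitney-decompose $\bbR^n\backslash E$ into cubes $Q_\nu$ with $\dia Q_\nu\sim\dst(Q_\nu,E)$ and fix a subordinate partition of unity $\{\th_\nu\}$; to each $Q_\nu$ with $\dst(Q_\nu,E)$ a bounded multiple of $\dia E$ attach, via \ref{i:0.2}, a nearby fat ball $B(z_\nu,r_\nu)\subset E$ and the first-order Taylor polynomial $P_\nu(x)=g(z_\nu)+Dg(z_\nu)(x-z_\nu)$, and set $P_\nu\equiv 0$ on the remaining (far) cubes. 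Define $G=\sum_\nu\th_\nu P_\nu$ on $\bbR^n\backslash E$, $G=g$ on $E$, and $\Phi:=A_\infty\circ(\mathrm{id}+G)$. Standard Whitney bookkeeping — using that $\nl Dg\nr$ and $\ml g\mr/\dia E$ are small on $E$, that $D\phi$ is uniformly continuous on $E$ (so that $DG(x)\to Dg(p)$ as $x\to p\in\partial E$, matching $D\phi$), and the bounded overlap of the supports, so that both $\sum\th_\nu DP_\nu$ and $\sum D\th_\nu\otimes(P_\nu-P_{\nu_0})$ are of the same order as the bounds from the previous step — shows $\Phi\in C^1(\bbR^n)$, $\Phi|_E=\phi|_E$, $\ml\Phi(x)-A_\infty(x)\mr=\ml G(x)\mr\le C\ep\,\dia E$ for all $x$, and $\nl D\Phi(x)-T_\infty\nr$ uniformly small. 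Because $\ep$ is small by \ref{i:0.3}, $x\mapsto\Phi(x)-T_\infty x$ is then a small-Lipschitz perturbation, so $\Phi$ obeys two-sided Lipschitz bounds with constants close to $1$; hence it is an injective proper local diffeomorphism of $\bbR^n$, therefore a global diffeomorphism and a distorted isometry with the controlled distortion, which yields all the asserted properties.
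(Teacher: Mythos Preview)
Your overall architecture (local rigidity on the fat balls, Whitney decomposition of $\bbR^n\setminus E$, patching by a partition of unity, Euclidean motion far away) is exactly the paper's. But two concrete choices in your construction create real gaps relative to the stated conclusions \ref{i:0.5}, \ref{i:0.8}, \ref{i:0.9}. First, by setting $G=g$ on $E$ and $G=\sum_\nu\theta_\nu P_\nu$ off $E$ with \emph{first}-order Taylor jets $P_\nu$, you get a $C^1$ Whitney extension that agrees with $\phi$ only on $E$, not on a neighborhood; so \ref{i:0.5} fails, and when $\phi\in C^m$ with $m\ge 2$ the extension is still only $C^1$ across $\partial E$, so \ref{i:0.8}--\ref{i:0.9} fail. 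The paper fixes both at once by inserting a cutoff $\wt\Theta_{in}$ supported in $\{\delta(x)\le 2\eta\}$ and writing $\Phi=\wt\Theta_{in}\,\phi+\sum_\nu\wt\Theta_\nu A_\nu$; then $\Phi=\phi$ where $\delta<\eta$, and the smoothness of $\Phi$ is inherited directly from $\phi$ near $E$ and is $C^\infty$ elsewhere.

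Second, and more structurally, you funnel everything through a single motion $A_\infty$ and then need $\|D\phi-T_\infty\|\le C\ep$ on all of $E$ (your ``$\|Dg\|$ small'') to run both the Whitney bookkeeping and the small-Lipschitz-perturbation injectivity argument. You flag this as the main obstacle, and indeed your justification (``comparing $A_\infty$ with the local motions $A_{z,r}$'') only yields $|T_{z,r}-T_\infty|\le C\ep/r$, which degenerates as $r\to 0$. The paper sidesteps this entirely: it attaches to each Whitney cube $Q_\nu$ its own Euclidean motion $A_\nu$ (equal to $A_\infty$ only for the large cubes), proves the purely \emph{local} consistency $|A_\mu(x)-A_\nu(x)|\le C\ep\,\delta(x)$ and $|\nabla A_\mu-\nabla A_\nu|\le C\ep$ on $Q_\mu^*\cap Q_\nu^*$, and combines this with the near-$E$ comparison $|\nabla\phi-\nabla A_\nu|\le C\ep$ to get $(\nabla\Phi)^+(\nabla\Phi)\in[(1-C\ep)I,(1+C\ep)I]$. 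Bijectivity then comes from a topological lemma ($\det\nabla\Phi\neq 0$ everywhere and $\Phi=A_\infty$ outside a ball $\Rightarrow$ one-to-one and onto), not from a global Lipschitz perturbation of $A_\infty$. In short: patch local Euclidean motions rather than Taylor jets of $A_\infty^{-1}\circ\phi$, and glue in $\phi$ itself near $E$ via a cutoff; then the step you call the crux simply disappears.
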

\vspace{-1.8em}
\begin{enumerate}[leftmargin=*, label={(\thesection.\arabic*)}]
\setcounter{enumi}{3}
\item \label{i:0.4} \textit{$(1-C\ep)\ml x-y \mr \leq \ml \Phi(x) - \Phi(y) \mr \leq (1+C\ep)\ml x-y \mr$ for all $x,y \in \bbR^{n}$; here $C$ is determined by $c_{0}$, $C_{1}$, $c_{2}$, $n$}.

\item \label{i:0.5} \textit{$\Phi = \phi$ in a neighborhood of $E$}.

\item \label{i:0.6} \textit{$\Phi = A_{\infty}$ outside $\bl x \in \bbR^{n}: d(x) < c_{0} \br$}.

\item \label{i:0.7} \textit{$\Phi: \bbR^{n} \to \bbR^{n}$ is one-to-one and onto}.

\item \label{i:0.8} \textit{If $\phi \in C^{m}(U)$ for some given $m \geq 1$, then $\Phi \in C^{m}\(\bbR^{n}\)$}.

\item \label{i:0.9} \textit{If $\phi \in C^{\infty}(U)$, then $\Phi \in C^{\infty}(\bbR^{n})$}.
\end{enumerate}

\subsection{Our Plan} Theorem~\ref{t:0.1} comes with an interesting tale whose foundations are in the work of \cite{DF4, FD1, FD3}.  We set out now to describe this tale in detail which will then allow us to explain Theorem~\ref{t:0.1} and place it in context. Once we have done this, will then devote our attention to proving Theorem~\ref{t:0.1}. As part of our proof, we will need machinary from \cite{DF4, FD1, FD3} and from \cite{F1,F3,F4,F5,F6} and we will discuss this machinary and place it in context when we need it. 

We begin with:

\section{A tale of Whitney distorted extensions, interpolation and alignment in $\mathbb R^n$ for fine sets-I} 
\setcounter{thm}{0}
\setcounter{equation}{0}
\subsection{$\varepsilon'$-distorted diffeomorphisms}
In order to begin our study of Theorem~\ref{t:0.1}, we begin by looking at the case when the set $E$ is finite. Here and throughout $\ep'>0$ will be a small enough positive number which depends on $n$.  We require 
a class of smooth 1-1 and onto small distortions from $\mathbb R^n \to\mathbb R^n$ intoduced first in \cite{FD3}
which we call $\varepsilon'$-distorted diffeomorphisms. 
\begin{dfn}
A diffeomorphism (hence 1-1 and onto) $\Phi:\mathbb R^n\to \mathbb R^n$ is ``$\varepsilon$'-distorted" provided
\beq
(1+\varepsilon')^{-1}I\leq [\nabla \Phi(x)]^{T}[\nabla \Phi (x)]\leq (1+\varepsilon')I
\label{e:edistortion}
\eeq
as matrices, for all $x\in \mathbb R^n$. Here, $I$ denotes the identity matrix in $\mathbb R^n$.  Henceforth all $\varepsilon'$-distorted diffeomorphisims will be understood from $\mathbb R^n$ to $\mathbb R^n$.
\end{dfn}

If $\Phi$ is $\varepsilon'$-distorted then an application of Bochner's theorem gives that we have for all $x,y\in \mathbb R^n$,
\beq
(1+\varepsilon')^{-1}|x-y|\leq |\Phi(x)-\Phi(y)|\leq |x-y|(1+\varepsilon').
\label{e:edistortion1}
\eeq

\footnote{$\Phi$ is bi-lLipchitz with constants $(1+\varepsilon')^{-1}$ and $(1+\varepsilon')$ and so it is a small distortion.} A $\varepsilon'$-distorted diffeomorphism $\Phi$ is proper if ${\rm det}(\Phi')>0$ on $\mathbb R^n$ and improper if ${\rm det}(\Phi')<0$
on $\mathbb R^n$. Since ${\rm det}(\Phi')\neq 0$ everywhere on $\mathbb R^n$, every $\varepsilon'$- distorted
diffeomorphism is proper or improper. $\varepsilon$-distorted diffeomorphisms are defined similarly. 
\subsection{A Whitney distortion extension theorem: The main result of \cite{FD3}}
In \cite{FD3}, we proved the following Whitney distortion extension theorem:

\begin{thm}
Let $\varepsilon'>0$ and let $y_1,...y_k$ and $z_1,...z_k$ be two $1\leq k\leq n$ sets of distinct points in $\mathbb R^n$. Then there exists $\beta'>0$ depending only on $\varepsilon'$ such that the following holds: Suppose that
\beq
(1+\beta')^{-1}\leq \frac{|z_i-z_j|}{|y_i-y_j|}\leq (1+\beta'),\, 1\leq i,j\leq k,\, i\neq j.
\label{e:emotionsa}
\eeq
Then there exists a $\varepsilon'$-distorted diffeomorphism $\Phi$ satisfying
\beq
\Phi(y_i)=z_i,\, 1\leq i\leq k.
\label{e:emotionsaaa}
\eeq
\label{t:lemmafive}
\end{thm}

\begin{itemize}
\item[(a)] Note that Theorem~\ref{t:lemmafive} works for any two distinct $1\leq k\leq n$ configurations $y_1,...,y_k$, $z_1,...,z_k$ satisfying the condition (\ref{e:emotionsa}). If we assume that there exists a map $\phi:E\to \mathbb R^n$ 
where $E:=\left\{y_1,y_2,...,y_k\right\}$ and $\phi(E):=\left\{z_1,z_2,...,z_k\right\}$, then Theorem~\ref{t:lemmafive} is a {\bf Whitney extension theorem in $\mathbb R^n$}. In particular, it is a generalization of a well known result of John on extensions of isometries. See for example \cite{WW1}. \footnote{Note that we are extending a bi-Lipchitz diffeomorphism which may not be $C^1$.}
\end{itemize}

\subsection{The intriguing restriction in Theorem~\ref{t:lemmafive}, ${\rm card}(E)=k\leq n$ with $n$ the dimension of $\mathbb R^n$. }
Theorem~\ref{t:lemmafive} has an intriguing feature, namely the restriction ${\rm card}(E)=k\leq n$ with $n$ the dimension of $\mathbb R^n$. Indeed, we showed in \cite{FD3} that Theorem~\ref{t:lemmafive} is false if $k>n$ by constructing a counter example for $n\geq 2$.
In the work of  \cite{FD1}, we showed how to remove this restriction on the number of points $k$ if 
roughly we require that on any $n+1$ of the $k$ points which form a relatively voluminous simplex, the extension $\Phi$ is orientation preserving.
We will discuss these results below in more detail. 

\subsection{The constants $\varepsilon$ and $\beta'$ and how they relate to each other.}
Notice that Theorem~\ref{t:lemmafive} has two positive constants $\varepsilon'$ and $\beta'$. $\varepsilon'$ determines the distortion of the map $\Phi$ whereas $\beta'$ determines the distortion of the pairwise distances between point sets $y_1,...,y_k$ and $z_1,...,z_k$ and depends only on $\varepsilon'$.
When looking at Theorem~\ref{t:lemmafive}, an immediate question which comes to mind is what is the quantative dependence of $\beta'$ on $\varepsilon'$. In \cite{FD1}, we proved:

\begin{thm}
Let $\varepsilon'>0$ and let $y_1,...y_k$ and $z_1,...z_k$ be two $1\leq k\leq n$ sets of distinct points in $\mathbb R^n$ with
\beq
\sum_{i\neq j}|y_i-y_j|^2+\sum_{i\neq j}|z_i-z_j|^2=1,\, y_1=z_1=0.
\label{e:lemmafourpoints}
\eeq
Then there exist constants $J,J'>0$ depending only on $n$  such the following holds: Set $\beta'=J'\varepsilon'^{J}$ and suppose 
\beq
||z_i-z_j|-|y_i-y_j||<\beta', \, i\neq j.
\label{e:lemmafourpointsa}
\eeq
Then there exists a $\varepsilon'$-distorted diffeomorphism $\Phi$ satisfying
\beq
\Phi(y_i)=z_i,\, 1\leq i\leq k.
\label{e:emotionsaaa}
\eeq
\label{t:lemmafiveprime}
\end{thm}

The main results of \cite{FD3}  below show under suitable conditions on $E$  that one can take typically $\beta'=\exp(-C_{K'}/\varepsilon)$ with a positive constant $C_{K'}$ depending on 
$n$ and a special fixed $K'\geq 1$ which depends on $n$.  Clearly  $\beta'=\exp(-C_{K'}/\varepsilon)$ is a more refined estimate than $\beta'=J'\varepsilon'^{J}$.  $\beta'=C\varepsilon$ for a $C>0$ depending on $n$  is clearly optimal and our main result in this paper Theorem~\ref{t:0.1} achieves this for certain $E$ and $\phi$.  

\subsection{Theorem~\ref{t:lemmafive}, Theorem~\ref{t:lemmafiveprime}: Interpolation in $\mathbb R^n$-I.} 
Notice that Theorem~\ref{t:lemmafive} and Theorem~\ref{t:lemmafiveprime} are {\bf Interpolation of data results in $\mathbb R^n$} with the map $\Phi$ the interpolation operator. 
If we assume that there exists a map $\phi:U\to \mathbb R^n$, $U\mathbb R^n$ open with  
$E:=\left\{y_1,y_2,...,y_k\right\}\subset U$ and $\phi(E):=\left\{z_1,z_2,...,z_k\right\}$ for which we have an extension $\Phi$ defined over all $\mathbb R^n$, then it is natural to want to estimate a suitable norm of $\Phi$ so that one may be able to answer questions of how well $\Phi$ approximates $\phi$ in $U\setminus E$.  Extensions $\Phi$ and sets $E$ which achieve good and optimal approximations are interesting questions and we address this problem in a forthcoming paper. 

We refer the reader to the papers by Lubinsky and his collaborators and the many references cited
therein for a good insight into these questions for  interpolation by polynomials \cite{L1,L2,L3,L4,L5,L6,L7,DL,DL1}. The sizes and optimal sizes of interpolation/extension norms and the operators which achieve these in Whitney extension problems is a difficult problem in general and has been addressed substantially by Fefferman and his collaborators in \cite{F1,F3,F4,F5,F6,KF,KF1,KF2,KF3, FIL1,FIL2,FIL3,FIL4,FIL5,FMH,I} and several other papers referenced therein.
\medskip

We have explained how  Theorem~\ref{t:lemmafive} and Theorem~\ref{t:lemmafiveprime} are Whitney distortion extensions and interpolation results in $\mathbb R^n$. 
We now show they are results on alignment of data in $\mathbb R^n$ as well. To understand this, we begin with:

\subsection{Alignment of data in Euclidean space and the Procrustes problem.}
An important problem in computer vision is comparing $k\geq 1$ point configurations in $\Bbb R^n$. \footnote{In computer vision, the phrase "$k\geq 1$ point configuration" means $k\geq 1$ distinct points.} One way to think of this is as follows: Given two $k$ configurations in $\Bbb R^n$, do there exist combinations of rotations, translations, reflections and compositions of these which map the one configuration onto the other. This is the shape registration problem. \footnote{In computer vision, two point clouds are said to have the same ``shape" if there exist combinations of rotations, translations, reflections and compositions of these which map the one set of points onto the other. This typically is called the registration problem.} A typical application of this problem arises in image processing and surface alignment where it is often necessary to align an unidentified image to an image from a given data base of images for example parts of the human body in face or fingerprint recognition. Thus the idea is to recognize points (often called landmarks) by verifying whether they align to points in the given data base. 
An image in $\Bbb R^{n}$ does not change under Euclidean motions. 
Motivated by this, in this paper, we will think of shape preservation in terms of whether there exists a Euclidean motion which maps one $k$ point configuration onto a second. \footnote{The shape identification problem can be stated more generally for other transformation groups. 
We restrict ourselves in this paper to Euclidean motions.}
In the case of labelled data (where the data points in each set are indexed by the same index set), an old approach called the Procrustes approach \cite{G,G1} analytically determines a Euclidean motion which maps the first configuration close to the other (in a $L^2$) sense.
There are a variety of ways to label points. See for example \cite{LW,WW}. 
In terms of good algorithms to do alignment of this kind, 
one method of Iterative Closet Point (ICP) for example is very popular and analytically computes an optimal rotation for alignment of $k$ point configurations, see for example \cite{DL} and the references cited therein for a perspective. 
Researchers in geometric processing think of the problem of comparing point clouds or finding a distance between them as to asking how to deform one point cloud into the other (each point cloud represented by a collection of meshes) in the sense of saying they have the same shape. 
We refer the reader to the references \cite{P,LW,WW,BCSZ,BSAB,WSW,WS,ZS,SW,DL,LAD,LD1,LD2, L, L1, BLCPFPJD, L2, SWK, SSK, VLBRC, LRS} and the many references cited therein for a broad perspective on applications of this problem.
\medskip

One way to dig deeper into the Procrustes problem is to compare pairwise distances between labelled points. In this regard, the following result is well known. See for example \cite{ATV,WW1}.
\medskip

\begin{thm}
Let $y_1,...,y_k$ and $z_1,...,z_k$ be two $k\geq 1$ point configurations in $\mathbb R^n$. Suppose that
\[
|z_i-z_j|=|y_i-y_j|,\, 1\leq i,j\leq k,\, i\neq j.
\]
Then there exists a Euclidean motion $A$ such that $A(y_i)=z_i,\, i=1,...,k.$ If $k\leq n$, then $A$ can be taken as proper.
\label{t:Theorem 1}
\end{thm}

Alignment of point configurations from their pairwise distances are encountered for example in X-ray crystallography and in the mapping of restriction sites of DNA. See \cite{P,SSL} and the references cited therein. (In the case of one dimension, this problem is known as the turnpike problem or in molecular biology, it is known as the partial digest problem). See the work of \cite{RS,LW} for example which deals with algorithms and their running time for such alignments. We mention that a difficulty in trying to match point configurations is the absence of labels in the sense that often one does not know which point to map to which. We will not deal with the unlabeled problem in this paper but see \cite{NDS}. We refer the reader to the papers by Werman and his 
collaborators and the many references cited therein for a good insight into the subject of alignment of data in Euclidean space. 
\cite{WW,DSW,PW,AKMSW,WO,BW,OW}.

\subsection{ Labelled approximate alignment in $\mathbb R^n$ in the case where pairwise distances are distorted.}
To study the labelled alignment data problem in the case where pairwise distances are distorted, we proved in \cite{FD3} an analogy of Theorem~\ref{t:Theorem 1} namely Theorem~\ref{t:lemmafour}:
\medskip

\begin{thm}\begin{itemize}\item[(a)] Given $\varepsilon'>0$, there exists $\beta'>0$ depending only on $\varepsilon'$, such that the following holds. Let $y_1,...,y_k$ and
$z_1,...,z_k$ be two $k\geq 1$ point configurations in $\mathbb R^n$ satisfying $(\ref{e:emotionsa})$.
Then, there exists a Euclidean motion $A$ such that
\beq
|z_i-A(y_i)|\leq \varepsilon' {\rm diam}\left\{y_1,...,y_k\right\}
\label{e:emotionsb}
\eeq
for each $1\leq i\leq k$. If $k\leq n$, then we can take $A$ to be a proper. 
\item[(b)] Suppose now that $y_1,...,y_k$ and
$z_1,...,z_k$ are two $k\geq 1$ point configurations in $\mathbb R^n$ so that $(\ref{e:lemmafourpoints})$ holds.
Then there exist constants $J,J'>0$ depending on $n$ such the following holds: Set $\beta'=J'\varepsilon'^{J}$ and suppose $(\ref{e:lemmafourpointsa})$.
Then, there exists a Euclidean motion $A$ such that
\beq
|z_i-A(y_i)|\leq \varepsilon'. 
\label{e:emotionsu}
\eeq
\label{t:lemmafour}
\end{itemize}
\end{thm}

Notice that if we require each set of points $y_1,...,y_k$ and $z_1,...,z_k$ to be in a bounded set of controlled radius, then we are able to be specific about the relationship between $\varepsilon'$ and $\beta'$ in Theorem~\ref{t:lemmafour},
namely $\beta'=J'\varepsilon'^{J}$. We provide the proof of Theorem~\ref{t:lemmafour} from \cite{FD3} in order to illustrate in particular, regarding (b), the use of an inequality called Lojasiewicz's inequality which is a technique allowing to approximate points in $E$ by certain varieties in $\mathbb R^n$.
A similar idea can be found for example in \cite{O,OST}.

\subsection{ Lojasiewicz's inequality and Proof of Theorem~\ref{t:lemmafour}.}
{\bf Proof of Theorem~\ref{t:lemmafour}} \ We first prove (a): Suppose not. Then for each $l\geq 1$, we can find points $y_1^{(l)},...,y_k^{(l)}$ and $z_1^{(l)},...,z_k^{(l)}$ in
$\mathbb R^D$ satisfying (\ref{e:emotionsa}) with $\beta'=1/l$ but not satisfying (\ref{e:emotionsb}). Without loss of generality, we may suppose that ${\rm diam}\left\{y_1^{(l)},...,y_k^{(l)}\right\}=1$ for each $l$ and that $y_1^{(l)}=0$ and
$z_l^{(1)}=0$ for each $l$. Thus $|y_i^{(l)}|\leq 1$ for all $i$ and $l$ and
\[
(1+1/l)^{-1}\leq \frac{|z_i^{(l)}-z_j^{(l)}|}{|y_i^{(l)}-y_j^{(l)}|}\leq (1+1/l)
\]
for $i\neq j$ and any $l$.
However, for each $l$, there does not exist an Euclidean motion
$A$ such that
\beq
|z_i^{(l)}-A(y_i^{(l)})|\leq \varepsilon'
\label{e:emotionsc}
\eeq
for each $i$. Passing to a subsequence, $l_1,l_2,l_3,...,$ we may assume
\[
y_i^{(l_{\mu})}\to y_i^{\infty},\, \mu\to \infty
\]
and
\[
z_i^{(l_{\mu})}\to z_i^{\infty},\, \mu\to \infty.
\]
Here, the points $y_i^{\infty}$ and $z_i^{\infty}$ satisfy
\[
|z_i^{\infty}-z_j^{\infty}|=|y_i^{\infty}-y_j^{\infty}|
\]
for $i\neq j$. Hence, by Theorem~\ref{t:Theorem 1}, there is an Euclidean motion $A^*$ such that $A*(y_i^{\infty})=z_i^{\infty}$. Consequently,
for $\mu$ large enough, (\ref{e:emotionsc}) holds with $l_{\mu}$. This contradicts the fact that for each $l$, there does not exist a $A$ satisfying (\ref{e:emotionsc}) with $l$.
Thus, we have proved all the assertions of (a) except that we can take $A$ to be proper if $k\leq n$. To see this, suppose that $k\leq n$ and let $A$ be an improper Euclidean motion such that
\[
|z_i-A(y_i)|\leq \varepsilon'{\rm diam}\left\{y_1,...,y_k\right\}
\]
for each $i$. Then, there exists an improper Euclidean motion $A^*$ that fixes $y_1,...,y_k$ and in place of $A$, we may use the map $A^* o A$ in (a) so (a) is proved. We now prove (b). Here we use an inequality
called Lojasiewicz's inequality, \cite{SJS} which allows us to control the upper bound estimate in (\ref{e:emotionsb}) and replace it by the upper bound in (\ref{e:emotionsu}) provided the points $y_1,...,y_k$ and $z_1,...,z_k$ each lie in bounded sets of controlled radius.
The Lojasiewicz's inequality says the following: Let $f:U\to \Bbb R$ be a real analytic function on an open set $U$ in $\Bbb R^n$ and $Z$ be the zero set of $f$. Assume that $Z$ is not empty. 
Then for a compact set $K$ in $U$, there exist positive constants $J$ and $J'$ depending on $f$ and $K$ such that uniformly for all $x\in K$, $|x-Z|^{J}\leq J'|f(x)|$.
It is easy to see that using this, one may construct approximating distinct points $y_1',...,y_k',z_1',...,z_k'\in \mathbb R^n$ (zeroes of a suitable $f$) with the following two properties:
(1) There exist positive constants $J,J'>0$ depending on $n$ such that 
\[
|(y_1,...,y_k,z_1,...,z_k)-(y_1',...,y_k',z_1',...,z_k')| \leq
J\varepsilon'^{J'}.
\]
In particular, we have
\[
|y_i-y_i'|\leq J\varepsilon'^{J'}
\]
and
\[
|z_i-z_i'|\leq J\varepsilon'^{J'}.
\]
(2) $|y_i'-y_j'|=|z_i'-z_j'|$ for every $i,j$.
Thanks to (2), we may choose a Euclidean motion $A$ so that $A(y^\prime_i)= z^\prime_i$ for each $i$.
Also, thanks to (1), there exists positive constants $J_1, J_2$ depending on $n$ with
\[
|A(y_i)-A(y_i')|\leq J_1\varepsilon'^{J_2}
\]
So it follows that there exist positive constants $J_3, J_4$ depending only on $n$ with
\[
|A(y_i)-z_i|\leq J_3\varepsilon'^{J_4}
\]
which is (b). $\Box$

\section{Building $\varepsilon'$-distorted diffeomorphisms; Slow twists and Slides, approximation of Euclidean motions.}
\setcounter{thm}{0}
\setcounter{equation}{0}

\subsection{Introduction to the ideas behind alligment and Theorem~\ref{t:lemmafive} and Theorem~\ref{t:lemmafiveprime}.}
In \cite{FD3} (see Example~\ref{e:Example1} and Example~\ref{e:Example2} below), we introduced certain slow rotations and translations as $\varepsilon'$
distorted diffeomorphisms. We call them {\it Slow twists} and {\it Slides}.
The $\varepsilon'$-distortions in  Theorem~\ref{t:lemmafive} and Theorem~\ref{t:lemmafiveprime} as is shown in \cite{FD3}, are  built using Slow twists and Slides and so 
Theorem~\ref{t:lemmafive} and Theorem~\ref{t:lemmafiveprime} are also an {\bf Alignment of data result} in Euclidean space.  Slow twists are $\varepsilon'$-distorted diffeomorphisms whose argument is a function of distance from the origin. These rotations reduce their speed of rotation for decreasing argument, are non-rigid for decreasing argument becoming rigid for increasing argument. See (\ref{e:lemmaone}). Slides are translations which are $\varepsilon'$-distorted diffeomorphisms and satisfy (\ref{e:lemmatwo}). (See \cite{DSW,NDS} for applications of Slow Twists and Slides to data allignment). 
\footnote{This a natural idea given we recall that
$O(n)$ is by definition the group of isometries of $\mathbb R^n\to\mathbb R^n$ which perserve a fix point ($SO(n)$ is the subgroup of $O(n)$ of orthogonal matrices of determinant 1) and Euclidean motions from 
$\mathbb R^n\to \mathbb R^n$ are the elements of the symmetry group of $\mathbb R^n$, ie all isometries of $\mathbb R^n$.}
In essence, $\varepsilon'$-distorted diffeomorphisms approximate Euclidean motions well. 
\medskip
It is instructive to provide more detail to these ideas:
 \subsection{Slow twists.}

\begin{exm}
{\rm Let $\varepsilon'>0$ and $x\in \mathbb R^n$. Let $S(x)$ be the $D\times D$ block-diagonal matrix
\[
\left(
\begin{array}{llllll}
D_1(x) & 0 & 0 & 0 & 0 & 0 \\
0 & D_2(x) & 0 & 0 & 0 & 0 \\
0 & 0 & . & 0 & 0 & 0 \\
0 & 0 & 0 & . & 0 & 0 \\
0 & 0 & 0 & 0 & . & 0 \\
0 & 0 & 0 & 0 & 0 & D_r(x)
\end{array}
\right)
\]
where for each $i$ either $D_i(x)$ is the $1\times 1$ identity matrix or else
\[
D_i(x)=\left(
\begin{array}{ll}
\cos f_i(|x|) & \sin f_i(|x|) \\
-\sin f_i(|x|) & \cos f_i(|x|)
\end{array}
\right)
\]
where $f_i:\mathbb R\to \mathbb R$ are functions satisfying the condition: $t|f_i'(t)|<J_1\varepsilon'$  for some constant $J_1>0$ depending only on $n$, uniformly for $t\geq 0$. 
The $1\times 1$ identity matrix is used to compensate for the even/odd size of the matrix. Let 
$\Phi(x)=\Theta^{T}S(\Theta x)$ where $\Theta$
is any fixed matrix in $SO(n)$. Then $\Phi$ is a $\varepsilon'$-distorted diffeomorphism and we call it a {\it slow twist} (in analogy to rotations). }
\label{e:Example1}
\end{exm}

\subsection{Slides.}
\begin{exm}
{\rm Let $\varepsilon'>0$ and let $g:\mathbb R^{n}\to \mathbb R^{n}$
be a smooth map such that $|g'(t)|<J_2\varepsilon'$ for some constant $J_2>0$ depending only on $n$, uniformly for $t\in \mathbb R^n$. 
Consider the map $\Phi(t)=t+g(t)$, $t\in \mathbb R^n$. Then $\Phi$ is a $\varepsilon'$-distorted diffeomorphism. We call the map $\Phi$ a {\it slide} (in analogy to translations). }
\label{e:Example2}
\end{exm}

Using the definitions of Slow Twists and Slides we are able to now build $\varepsilon'$-distorted diffeomorphisms and approximate Euclidean motions well by them.  This is given in
Theorem~\ref{t:lemmatwists} below.

\subsection{Building $\varepsilon'$-distorted diffeomorphisms and approximation of Euclidean motions.}
We have:

\begin{thm}
Let $\varepsilon'>0$. 
\begin{itemize}
\item[(a)] There exists $\eta>0$ depending on $\varepsilon'$ for which the following holds. Let $\Theta\in SO(n)$, $r_1,r_2>0$ and let $0<r_1\leq \eta r_2$.
Then, there exists an $\varepsilon'$-distorted diffeomorphism $\Phi$ such that
\beq
\left\{
\begin{array}{ll}
\Phi(x)=\Theta x, & |x|\leq r_1 \\
\Phi(x)=x, & |x|\geq r_2
\end{array}
\right.
\label{e:lemmaone}
\eeq
\item[(b)] There exists $\eta_1>0$ depending on $\varepsilon'$ such that the following holds. Let $A$ be a proper Euclidean
motion. Suppose $0<r_3\leq \eta_{1} r_4$ and $|x_0|\leq c\varepsilon' r_3$.
Then there exists an $\varepsilon'$-distorted diffeomorphism $\Phi_1$ such that
\beq
\left \{
\begin{array}{ll}
\Phi_{1}(x)=A(x), & |x|\leq r_3 \\
\Phi_{1}(x)=x, & |x|\geq r_4
\end{array}
\right.
\label{e:lemmatwo}
\eeq
\item[(c)] There exists $\eta_2>0$ depending on $\varepsilon'$ such that the following holds. Let $r_5,r_6>0$ with $0<r_5\leq \eta_2 r_6$ and let $x,x'\in \mathbb R^{D}$ with $|x-x'|\leq c\varepsilon' r_5$ and 
$|x|\leq r_5$. Then, there exists an $\varepsilon'$-distorted diffeomorphism $\Phi$ such that 
\beq
\Phi(x)=x'\, {\rm and}\, \Phi(y)=y,\, {\rm for}\, |y|\geq r_6.
\label{e:corollaryone}
\eeq
\item[(d)] Let $r>0$, $x_1\in \mathbb R^D$ and let $B(x_1,r)$ be a ball in $\mathbb R^n$. Let $A$ and $A^*$ be proper Euclidean motions such that
\beq
|A(x_1)-A^*(x_1)|\leq \varepsilon' r.
\label{e:emotionsuu}
\eeq
Then there exists a $C\varepsilon'$-distorted diffeomorphism $\Phi$ such that $\Phi=A$ in
$B(x_1,\exp(-1/\varepsilon')r)$ and $\Phi=A^*$ outside $B(x_1, r)$. Here $C>0$ depends only on $n$.
\end{itemize}
\label{t:lemmatwists}
\end{thm}

 Theorem~\ref{t:lemmatwists} together with Theorem~\ref{t:lemmafour} now form the basis for the proof of Theorem~\ref{t:lemmafive} and Theorem~\ref{t:lemmafiveprime}.
\medskip

We are ready to continue our tale: 

\section{A tale of Whitney distorted extensions, interpolation and alignment in $\mathbb R^n$ for fine sets-II.}
\setcounter{thm}{0}
\setcounter{equation}{0}

We recall that Theorem~\ref{t:lemmafive} and Theorem~\ref{t:lemmafiveprime} have an intriguing feature, namely the restriction of the number of points $k$ to be bounded by the dimension of $\mathbb R^n$, $n$. Indeed, we showed in \cite{FD3} that Theorem~\ref{t:lemmafive} is false if $k>n$ by constructing a counter example for $n\geq 2$.
Theorem~\ref{t:Theorem2a}, Theorem~\ref{t:Theorem2b}, Theorem~\ref{t:Theorem3}, 
Theorem~\ref{t:Theorem4} and Theorem~\ref{t:cextensionblock2}, the main results of \cite{FD1} which we will describe below, tell us that we may remove the restriction on $k$ if 
roughly we require that on any $n+1$ of the $k$ points which form a relatively voluminous simplex, the extension $\Phi$ is orientation preserving. We will make this notation more precise by meeting a special object called an $\eta$ block. Throughout this section $n\geq 2$.
\subsection{$\eta$ block} 

{\begin{dfn}
{\rm For $z_0,z_1,...,z_l \in \mathbb R^n$ with $l\leq n$, $V_l(z_0,...,z_l)$ will denote the $l$-dimensional volume of the $l$-simplex with vertices at $z_0,...,z_l$. If $E\subset \mathbb R^n$ is a finite set, then $V_l(E)$ denotes the max of $V_l(z_0,...,z_l)$ over all 
$z_0,z_1,...,z_l\in S$. \footnote{If $V_n(E)$ is small, then we expect that $E$ will be close to a hyperplane in $\mathbb R^n$.} Let 
$\phi:E\to \mathbb R^n$ and let $0<\eta<1$. A positive (resp. negative) $\eta$-block for $\phi$ is a $n+1$ tuple $(x_0,...,x_n)\in \mathbb R^n$ such that the following two conditions hold: (1) $V_n(x_0,...,x_n)\geq (\leq) \eta^n{\rm diam}(x_0,...,x_n)$. (2) Let $T$ be the unique affine map which agrees with $\phi$ on $E$. $T$ exists and is unique by virtue of \cite{ATV}. Then we assume that $T$ is proper or improper. (Here we recall that an invertible affine map $T':\mathbb R^n\to \mathbb R^n$ is proper if ${\rm det}(T')>0$ and improper if ${\rm det}(T')<0$. Since $T'$ is invertible, $T'$ is either proper or improper.) Thus if the map $T$ in (2), is not invertible then $(x_0,...,x_n)$ is not an 
$\eta$ block.) }
\label{d:block}
\end{dfn}

We are ready for: 
\subsection{ Five results.}

\begin{thm}
Let $E\subset \mathbb R^n$ be finite. There exists a positve constant $K'\geq 1$ depending on $n$ and positive constants $c_{K'}$, $C'_{K'}$, $C''_{K'}$ depending on $n$ and $K'$ such that the 
following holds: Set $\eta=\exp(-C'_{K'}/\varepsilon)$ and $\beta=\exp(-C''_{K'}/\varepsilon)$ with $0<\varepsilon<c_{K'}$.  Let $\phi:E\to \mathbb R^n$ satisfy
\beq
(1+\beta)^{-1}|x-y|\leq |\phi(x)-\phi(y)|\leq (1+\beta)|x-y|, \, x, y\in E.
\label{e:deltadistortion}
\eeq
Then if $\phi$ has no negative $\eta$ block, there exists a proper $\varepsilon$-distorted diffeomorphism
$\Phi$ such that $\phi=\Phi$ on $E$ and $\Phi$ agrees with the proper Euclidean motion $A_{\infty}$ on
\[
\left\{x\in \mathbb R^n:\, {\rm dist}(x,E)\geq 10^4{\rm diam}(E)\right\}.
\]
\label{t:Theorem2a}
\end{thm}

\begin{thm}
Let $E\subset \mathbb R^n$ be finite. There exists a positve constant $K'\geq 1$ depending on $n$  and positive constants $c_{K'}$, $C'_{K'}$, $C''_{K'}$ depending only on $n$ and $K'$ such that the following holds: Set $\eta=\exp(-C'_{K'}/\varepsilon)$ and $\beta=\exp(-C''_{K'}/\varepsilon)$ with $0<\varepsilon<c_{K'}$. Let $\phi:E\to \mathbb R^n$ satisfy
$(\ref{e:deltadistortion})$. Then if $\phi$ has a negative $\eta$ block, $\phi$ cannot be extended to a proper $\beta$ distorted diffeomorphism. 
\label{t:Theorem2b}
\end{thm}

\begin{thm}
Let  $E\subset \mathbb R^n$ be finite. There exists a positve constant $K'\geq 1$ depending on $n$ and positive constants $c_{K}$, $C'_{K'}$ depending only on $n$ and $K'$ such that the 
following holds: Set $\beta=\exp(-C'_{K'}/\varepsilon)$ with $0<\varepsilon<c_{K'}$.  Let $\phi:E\to \mathbb R^n$ satisfy
$(\ref{e:deltadistortion})$. Suppose that for any $S_o\subset S$ with at most $2n+2$ points, there exists a $\beta$ distorted diffeomorphism $\Phi^{S_0}$  such that $\Phi^{S_0}=\phi$ on $S_0$. Then, there exists an $\varepsilon$-distorted diffeomorphism $\Phi$ such that $\Phi=\phi$.
\label{t:Theorem3}
\end{thm}

\begin{thm}
Let $E\subset \mathbb R^n $ with ${\rm card}(S)\leq n+1$. There exist positive constants $c,C$ depending only on $n$ such that the following holds: Set 
$\beta=\exp(-C/\varepsilon)$ with $0<\varepsilon<c$ and let $\phi:S\to \mathbb R^n$ satisfy
$(\ref{e:deltadistortion})$.
Then there exists a $\beta$-distorted diffeomorphism $\Phi:\mathbb R^n\to \mathbb R^n$ such that $\Phi=\phi$.
\label{t:Theorem4}
\end{thm}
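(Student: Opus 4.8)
My plan is to rescale so that $\dia E=1$, invoke the explicit alignment result of Theorem~\ref{t:lemmafoura} to reduce to the case where $\phi$ is extremely close to the identity on $E=\{x_1,\dots,x_l\}$ ($l\le n+1$), and then assemble $\Phi$ from a bounded number of slides (Example~\ref{e:Example2}) and slow twists (Example~\ref{e:Example1}), organized by the scales present in $E$. Since $\phi$ is $\delta$-distorted, the pairwise distances among the $z_i:=\phi(x_i)$ agree with those among the $x_i$ to additive error $O(\delta)$; so Theorem~\ref{t:lemmafoura} (after the harmless $l$-dependent rescaling it requires) produces a Euclidean motion $A_\infty$ — which we allow to be improper, since $l$ may equal $n+1$ — with $\ml z_i-A_\infty(x_i)\mr\le c_1\delta^{c_2}$ for all $i$. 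Post-composition by a Euclidean motion preserves $\varepsilon$-distortion and the properties of being one-to-one, onto and $C^m$, so it suffices to extend $A_\infty^{-1}\circ\phi$, which is still $\delta$-distorted on $E$; replacing $\phi$ by it we may assume $\phi(x_i)=x_i+v_i$ with $\ml v_i\mr\le\rho:=c_1\delta^{c_2}$, and since $\delta=\exp(-C'/\varepsilon)$ we get $\rho\le\exp(-c_2C'/\varepsilon)$, which is smaller than any fixed power of $\varepsilon$. The single possible orientation reversal has now been absorbed into $A_\infty$, so every rotational adjustment made below will be proper and hence, by the remark following Example~\ref{e:Example1} that every element of $SO(n)$ equals some $W(y)$, realizable by a slow twist.

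Next I would apply single-linkage clustering to the $\le n+1$ points of $E$, obtaining a tree of bounded (dimensional) complexity that displays a bounded number of scales $1=t_0>t_1>\dots>t_N$: within a cluster at scale $t_j$ all pairwise distances are comparable to $t_j$, while two clusters with a common parent are separated by more than a large dimensional multiple of the larger one's diameter. (If $N=0$, i.e.\ all pairwise distances are comparable, then $\ml v_i-v_j\mr\le 2\rho$ is tiny relative to every $\ml x_i-x_j\mr$ and a single slide already does the job.) In general, for each cluster $Q$ at scale $t$ fix a vertex $x_Q\in Q$ and build $\Phi=G_0\circ G_1\circ\cdots\circ G_N$ with the finest scale applied first, where $G_j$ is a composition, over the clusters $Q$ of scale $t_j$, of maps centered at $x_Q$ of the form (slide)$\,\circ\,$(slow twist): the twist absorbs the rotational part and the slide the translational part of the discrepancy between where the factors already applied place $Q$ and the target placement read off from the $z_i$. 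By Examples~\ref{e:Example1} and \ref{e:Example2} each factor is $(c\varepsilon)$-distorted once the twist angle has radial derivative $<c\varepsilon/s$ at radius $s$ and the slide has gradient $<c\varepsilon$; taking $c=c(n)>0$ small enough, the composition of the $O(n)$ factors is $\varepsilon$-distorted, one-to-one, onto and as smooth as $\phi$, it equals $\phi$ on $E$ by construction, and restoring $A_\infty$ gives the map $\Phi$ asserted by the theorem (in fact a Euclidean motion outside a fixed neighborhood of $E$).

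The step I expect to be the main obstacle is the scale-by-scale bound that makes the construction work: at each scale $t_j$ one must show the rotational discrepancy $\theta$ to be absorbed satisfies $\theta\le c\varepsilon\ln(1/t_j)$, so that the slow twist — which must run from angle $\theta$ at radius $t_j$ down to $0$ — is supported in $B(x_Q,\,t_je^{\theta/(c\varepsilon)})\subseteq B(x_Q,1)\subseteq\{\dst(\cdot,E)<1\}$, and that the residual translation is $\le c\varepsilon t_j$. For $t_j$ not much smaller than $\rho$ the bound $\ml v_i\mr\le\rho$ gives this with room to spare. The delicate case is a cluster $Q$ of scale $\tau\ll\rho$, where the global alignment says nothing about $Q$'s internal orientation; but the $\delta$-distortion hypothesis, applied to two points of $Q$ and a third point of $E$ at distance $\gtrsim\tau$ times a large constant, still forces the rotational discrepancy to be $\lesssim\rho/\tau$, so that whenever it exceeds the amount $\sim c\varepsilon$ that a twist can freely accomplish within the cluster gap one gets $\tau\lesssim\rho/\varepsilon\le\exp(-c_2C'/(3\varepsilon))$, hence $\ln(1/\tau)\gtrsim c_2C'/\varepsilon$ — leaving room for a twist of any angle $\le 2\pi$ once $C'$ is large enough in terms of $n$, $c$, $c_2$. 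Finally one must track that the unwinding of a twist attached to a fine cluster passes through the coarser clusters, where it acts as a nearly rigid motion, and verify that this rigid perturbation is exactly what the coarser-scale factors are arranged to absorb; controlling all the errors accumulated across the $O(n)$ levels so that they stay within the per-factor budget $c\varepsilon$ is the real content of the argument, and is precisely where one uses that $\delta=\exp(-C'/\varepsilon)$ with $C'$ large depending on $n$.
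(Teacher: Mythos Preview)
The present paper does not contain a proof of this statement. Theorem~\ref{t:Theorem4} is one of four results (\ref{t:Theorem2a}--\ref{t:Theorem4}) that the paper quotes from the companion work \cite{DF2}; the sentence ``In \cite{DF2}, we proved'' precedes the block, and the remainder of the paper (Sections~\ref{s:1}--\ref{s:5}) is devoted entirely to proving Theorem~\ref{t:0.1}, the compact-$E$ result. So there is no in-paper argument for you to be compared against.

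That said, your overall architecture---global alignment via Theorem~\ref{t:lemmafoura}, absorption of a possible reflection into $A_\infty$, then a hierarchy of slides and slow twists organized by single-linkage clustering of the $\le n+1$ points---is exactly the machinery the paper advertises for the finite case in Section~3 (Examples~\ref{e:Example1}, \ref{e:Example2} and the discussion around Theorem~\ref{t:lemmafive}), and it is the approach one expects \cite{DF2} to take. You have also correctly located the crux: at a cluster scale $\tau$ the rotational defect is $\lesssim\rho/\tau$, and the dichotomy ``either $\rho/\tau\le c\varepsilon$ and a short twist suffices, or $\tau$ is exponentially small and a full-angle twist fits in the available room because $\ln(1/\tau)\gtrsim C'/\varepsilon$'' is precisely what forces the relation $\delta=\exp(-C'/\varepsilon)$.

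One point to tighten: your bound $\theta\lesssim\rho/\tau$ already follows directly from the global alignment $|z_i-x_i|\le\rho$ (the direction of $z_i-z_j$ differs from that of $x_i-x_j$ by $O(\rho/|x_i-x_j|)$); the appeal to a third point and the $\delta$-distortion hypothesis is not needed for this. More substantively, your support estimate for the twist at scale $t_j$ lands it in $B(x_Q,1)$, but what you actually need is that it fits inside the gap to the parent cluster, i.e.\ inside a ball of radius comparable to $t_{j-1}$, and you must also account for the rigid perturbation that the unwinding twist induces on coarser clusters. You acknowledge this in your last sentence, and it is indeed where the bookkeeping lives; filling it in carefully (and choosing $C'$ to beat the accumulated constants over the $\le n$ levels) is what turns the sketch into a proof.
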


\begin{thm}
Let $\phi:E\to \mathbb R^n$ where $E\in \mathbb R^n$ is finite and let $0<\eta<1$. Suppose that $\phi$ satisfies $(\ref{e:deltadistortion})$ and has a positive $\eta$ block and a negative
$\eta$ block. Let $0<\beta<c\eta^n$ for small enough $c>0$ depending only on $n$. Then $\phi$ does not extend to a
$\beta$ distorted diffeomorphism $\Phi:\mathbb R^n\to \mathbb R^n$.
\label{t:cextensionblock2}
\end{thm}
%Theorem~\ref{t:lemmafive} and Theorem~\ref{t:lemmafiveprime} 
%$\varepsilon'$-distorted "weaker case"? Meaning cite the couter example? 
%........................here need to say that 2D+1 case is a waker case of main %result with Cepsilon. then mention below.
%please check all K to K' in paper 2
%check everything before again. %%CK one of these theorem references is failing to point to anything when I compile ---corrected

Note that in Theorem~\ref{t:Theorem2a}, Theorem~\ref{t:Theorem2b}, Theorem~\ref{t:Theorem3}, 
Theorem~\ref{t:Theorem4} and Theorem~\ref{t:cextensionblock2}, we are also able to improve the order $\beta$ a polynomial in $\varepsilon$  from to $\beta$ to 
$\exp(-\frac{C_K}{\varepsilon}$ for a constant $C_K>0$ dependning on $n$ and $E$.

\section{Approximation by Euclidean Motions.}
\setcounter{equation}{0}
We now want an analogy of Theorem~\ref{t:lemmareflection3} for a proper $\varepsilon$- diffeomorphisms (see Theorem~\ref{t:lemmareflectionex4} below) and it is here that we meet the constant $K'$. See (\ref{e:specialk}) 
below.
In order to do this, we need more machinery. To begin, it will be necessary first
to study pointwise approximation of $\varepsilon$ diffeomorphisms by given Euclidean motions. It follows from \cite{DF4} 
that given $\varepsilon''>0$ (small enough and depending only on $D$) and $\Phi:\mathbb R^D\to \mathbb R^D$ a $\varepsilon''$-distorted diffeomorphism, there exists a Euclidean motion $A:\mathbb R^D\to \mathbb R^D$ with 
$|\Phi(x)-A(x)|\leq C\varepsilon''$ for $x\in B(0,10)$. 
Actually, using the well-known John-Nirenberg inequality, in \cite{DF4} we proved a lot more, namely a BMO theorem for $\varepsilon''$-distorted diffeomorphisms which is in the next subsection. 

\subsection{BMO theorem for $\varepsilon'$-distorted diffeomorphisms.} 
{\bf BMO theorem for $\varepsilon''$-distorted diffeomorphisms}:\, Let $\varepsilon''>0$ be a small enough positive number depending only on $D$,  $\Phi:\mathbb R^D\to \mathbb R^D$  a $\varepsilon''$-distorted diffeomorphism and let $B\in \mathbb R^D$ be a ball. There exists $T=T_B\in O(D)$ and 
$C>0$ such that for all $\lambda\geq 1$, we have
\beq
{\rm vol}\left\{x\in B:|\Phi'(x)-T(x)|>C\varepsilon''\lambda\right\}\leq \exp(-\lambda){\rm vol}(B)
\eeq
{\rm and slow twists in Example~\ref{e:Example1} show that the estimate above is sharp. The set 
\beq
\left\{x\in B:|\Phi'(x)-T(x)|> C\varepsilon''\lambda\right\}
\eeq may well be small in a more refined sense but we did not pursue this investigation in \cite{DF4}}.

\subsection{Approximation by Euclidean Motions; Proper and Improper.}
From Definition~\ref{d:block}, we recall that $V_l(z_0,...,z_l)$ denotes the $l$-dimensional volume of the $l$-simplex with vertices at $z_0,...,z_l$. We now have:

\begin{thm}
\item[(a)] Let $\varepsilon>0$. Let $\Phi:\mathbb R^D\to \mathbb R^D$ be a $\varepsilon$-distorted diffeomorphism. Let $z\in \mathbb R^D$ and $r>0$ be given.  Then, there exists an Euclidean motion $A=A_B$ with  
$B=B(z,r)$,  such that for $x\in B(z,r)$,
\begin{itemize}
\item[(1)] $|\Phi(x)-A(x)|\leq C\varepsilon r$.
\item[(2)] Moreover, $A$ is proper iff $\Phi$ is proper.
\end{itemize}
\item[(b)] Let $x_0,...,x_D\in \mathbb R^D$ with ${\rm diam}\left\{x_0,...,x_D\right\}\leq 1$ and $V_D(x_0,...,x_D)\geq \eta^D$ where
$0<\eta<1$ and let $0<\beta<c'\eta^D$ for a small enough $c'$. Let $\Phi:\mathbb R^D\to\mathbb R^D$ be a $\beta$-distorted diffeomorphism. Finally let $T$ be the one and only one affine map that agrees with
$\Phi$ on $\left\{x_0,...,x_D\right\}$. (We recall that the existence and uniqueness of such $T$ follows from \cite{ATV}, T may not be invertible). Then $\Phi$ is proper iff $T$ is proper.
\label{t:theorememotionapprox}
\end{thm}

Theorem~\ref{t:lemmatwists} then follows from (\ref{e:lemmaone}), (\ref{e:lemmatwo}) and (\ref{e:corollaryone}). $\Box$

We are now going to introduce the notation $E$ for a special finite set in $\mathbb R^D$ whose diameter satisfies ${\rm diam}(E)\leq 1$ and its points are well separated (we will define this more precisely in a moment and throughout the paper).
The aim of the next section is to show that for certain such sets $E$, 
we can always construct an improper $\varepsilon$-distorted
diffeomorphism $\Phi:\mathbb R^D\to\mathbb R^D$ such that $\Phi(z)=z$ for each $z\in E$, $\Phi$ agrees with a improper Euclidean motion $A_z:\mathbb R^D\to \mathbb R^D$ in a ball of small enough radius (in particular smaller than the maximum separation distance between points in $E$) and with center $z$ for each $z\in E$ and $\Phi$ agrees with a improper Euclidean motion on all points in $\mathbb R^D$ whose distance to $E$ is large enough. Such sets $E$ we will use to define our special constant $K$ in (~\ref{e:specialk}). 

Our technique will be one of Approximate Reflections. Thus we intoduce our next section of:

\section{Approximate Reflections from $\mathbb R^D$ to $\mathbb R^D$.}
\setcounter{equation}{0}
Suppose that $S$ is a finite subset of a affine hyperplane $H\subset \mathbb R^D$. (So $H$ has dimension $D-1$). Let $A:\mathbb R^D\to \mathbb R^D$ denote reflection through $H$. Then 
$A$ is an improper Euclidean motion and $A(z)=z$ for each $z\in S$. \footnote{For easy understanding: Suppose $D=2$ and $H$ is a line with the set $S$ on the line. Let $A$ denote reflection of the lower half plane to the upper half plane through $S$. Then $A$ is a Euclidean motion and fixes points on $S$ because it is an isometry.} Now suppose that $S$ is again a finite subset of a affine hyperplane $H\subset \mathbb R^D$ and assume that 
we have a map $p:\mathbb R^D \to \mathbb R^D$ with $p(z)$ close to $z$  on $S$. (We will define this more precisely in a moment).  Then we call $p$ an approximate reflection through $H$.
We will now use approximate reflections to construct $\varepsilon$-distorted diffeomorphisms as we have described above.

We proceed and for our set $S$, we will now take a special set $E\subset \mathbb R^D$ as mentioned above which we now define precisely in our main result of this next subsection:
\subsection{Theorem~\ref{t:lemmareflection3}.}
We have:

\begin{thm}
Let $\varepsilon>0$, $0<\tau<1$, $E\subset \mathbb R^D$ be a finite set with ${\rm diam}(E)=1$ and $|z-z'|\geq \tau$ for all $z,z'\in E$ distinct. Assume that $V_D(E)\leq \eta^{D}$ where $0<\eta<c\tau\varepsilon$ for small enough $c$. Here we recall $V_D$ is given by Definition~\ref{d:block}. Then, there exists a $C\varepsilon$-distorted diffeomorphism $\Phi:\mathbb R^D\to \mathbb R^D$ with the following properties:
\begin{itemize}
\item[(a)] $\Phi$ coincides with an improper Euclidean motion on $\left\{x\in \mathbb R^D:\, {\rm dist}(x, E)\geq 20\right\}$.
\item[(b)] $\Phi$ coincides with an improper Euclidean motion $A_z$ on $B(z,\tau/100)$ for each $z\in E$.
\item[(c)] $\Phi(z)=z$ for each $z\in E$.
\end{itemize}
\label{t:lemmareflection3}
\end{thm}

\section{The constant $K'$ and Theorem~\ref{t:lemmareflection3} for proper $\varepsilon$-distorted diffeomorphisms.}
\setcounter{equation}{0}
We are now able to prove an analogy of Theorem~\ref{t:lemmareflection3} for proper $\varepsilon$-distorted diffeomorphisms, namely Theorem~\ref{t:lemmareflectionex4}.
Two important ingredients we will need to do this will be to assume a separation condition on the points of $E$ as in Theorem~\ref{t:lemmareflection3} and also a condition on the cardinality of $E$ which will be our constant $K$, see (\ref{e:specialk}) below.

Thus we have as our main result in this section:

\begin{thm}
Let $\phi:E\to \mathbb R^D$ with $E\subset \mathbb R^D$ finite. Let $\varepsilon>0$ and $0<\tau<1$. 
We make the following assumptions:
\begin{itemize}
\item Assumption on parameters:
\begin{itemize}
\item Let $0<\eta<c\varepsilon \tau$ for small enough $c$.
\item Let $C_K\beta^{1/\rho_k}\tau^{-1}<{\rm min}(\varepsilon,\eta^D)$ for some large enough $C_K>0$ and $\rho_K>0$, the later also depending only on 
$D$ and $K$.
\end{itemize}
\item Assumptions on E: ${\rm diam}(E)=1$,\, $|x-y|\geq \tau$, for any $x,y\in E$ distinct, 
\beq {\rm card}(E)\leq K.
\label{e:specialk}
\eeq
\item Assumption on $\phi$: $\phi$ has no negative $\eta$-blocks and
\[
(1+\beta)^{-1}|x-y|\leq |\phi(x)-\phi(y)|\leq (1+\beta)|x-y|,\, x, y\in E.
\]
\end{itemize}
Then, there exists a proper $C\varepsilon$-distorted diffeomorphism $\Phi:\mathbb R^D\to \mathbb R^D$ with the following
properties:
\begin{itemize}
\item $\Phi=\phi$ on $E$.
\item $\Phi$ agrees with an Euclidean motion $A_{\infty}$ on
$\left\{x\in \mathbb R^D:\, {\rm dist}(x,E)\geq 1000\right\}.$
\item For each $z\in E$, $\Phi$ agrees with a Euclidean motion $A_z$ on $B(z,\tau/1000)$.
\label{t:lemmareflectionex4}
\end{itemize}
\end{thm}

The rest of this paper proves Theorem~\ref{t:0.1}.

%%%%%%%%%% Section: Preliminary Results %%%%%%%%%%
\section{\textsc{Preliminary Results}} \label{s:1}
\setcounter{thm}{0}
\setcounter{equation}{0}
Throughout this paper, we adopt the following conventions regarding constants. A ``controlled constant'' is a constant determined by $c_{0}$, $C_{1}$, $c_{2}$, $n$ in Section 2. We write $c, \, C, \, C'$, etc. to denote controlled constants. When we write e.g., $c_{0}, \, C_{17}$, or $c_{5}$, then this expression denotes the same controlled constant in every occurrence. When we write e.g. $c, \, C$, or $C'$, without a numerical subscript, then this expression may denote different controlled constants in different occurrences. We write $B(z,r)$ to denote the closed ball in $\bbR^{n}$ with center $z$, radius $r$.

%%% Lemma 1.1 %%%
\begin{lem}
	\label{l:1.1}
	\addcontentsline{toc}{subsection}{Lemma 1.1}
	Let $B(z,r) \subset E$. Then there exists a Euclidean motion $A$, such that for every $K \geq 1$, and for every
	\beq
		\label{eq:1.1}
		y \in B(z, Kr) \cap E, \text{ we have } \ml \phi(y) - A(y) \mr \leq CK^{2}\ep r.
	\eeq
\end{lem}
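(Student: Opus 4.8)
The plan is to build the Euclidean motion $A$ from a well‑spread simplex sitting inside $B(z,r)$, to show it approximates $\phi$ on that simplex with error $O(\ep r)$, and then to propagate this estimate outward — picking up one factor of $K$ from the multiplicative nature of the distortion and a second factor of $K$ from a short‑baseline effect.

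First I would fix a fat simplex in the ball: put $p_{0}=z$ and $p_{j}=z+\tfrac r2 e_{j}$ for $j=1,\dots,n$, where $e_{1},\dots,e_{n}$ is the standard basis of $\bbR^{n}$. Since $B(z,r)\subset E$, all the $p_{j}$ lie in $E$; their pairwise distances are comparable to $r$, and the $n$‑simplex $(p_{0},\dots,p_{n})$ has $n$‑dimensional volume $\geq c\,r^{n}$. Next I would produce $A$. Set $q_{j}=\phi(p_{j})$. By \ref{i:0.1}, $\bigl|\ml q_{i}-q_{j}\mr-\ml p_{i}-p_{j}\mr\bigr|\leq\ep\ml p_{i}-p_{j}\mr\leq C\ep r$ for all $i,j$. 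Translating in source and target so that $p_{0}=q_{0}=0$, and using polarization $2\langle a,b\rangle=\ml a\mr^{2}+\ml b\mr^{2}-\ml a-b\mr^{2}$, one sees that the Gram matrix of $(q_{1},\dots,q_{n})$ differs by at most $C\ep r^{2}$ in operator norm from that of the orthogonal system $(p_{1},\dots,p_{n})$, namely $\tfrac{r^{2}}{4}I$. Hence the linear map $T$ with $Tp_{j}=q_{j}$ satisfies $\|T^{T}T-I\|\leq C\ep$, so (using \ref{i:0.3} to take square roots) its orthogonal polar factor $O\in O(n)$ obeys $\|T-O\|\leq C\ep$. Letting $A$ be $O$ conjugated back by the two translations, we obtain a Euclidean motion with
\[
\ml\phi(p_{j})-A(p_{j})\mr\leq C\ep r,\qquad j=0,1,\dots,n .
\]
(One could instead invoke Theorem~\ref{t:lemmafour}, but that does not record the linear dependence $\delta\sim\ep$ we need, whereas the Gram‑matrix computation does.)

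Finally I would propagate. Fix $K\geq1$ and $y\in B(z,Kr)\cap E$. Since $y,p_{i}\in E$, \ref{i:0.1} gives $\bigl|\ml\phi(y)-\phi(p_{i})\mr-\ml y-p_{i}\mr\bigr|\leq\ep\ml y-p_{i}\mr\leq CK\ep r$, while $\ml A(y)-A(p_{i})\mr=\ml y-p_{i}\mr$ because $A$ is an isometry. Together with the previous display this yields
\[
\Bigl|\ml\phi(y)-A(p_{i})\mr-\ml A(y)-A(p_{i})\mr\Bigr|\leq CK\ep r,\qquad i=0,1,\dots,n .
\]
Now write $b_{i}=A(p_{i})$, $u=\phi(y)$, $u'=A(y)$; the $b_{i}$ form a simplex of volume $\geq c\,r^{n}$ and diameter $\leq Cr$, and the last display (with $i=0$) together with $\ml A(y)-A(z)\mr=\ml y-z\mr\leq Kr$ gives $\ml u-b_{0}\mr,\ml u'-b_{0}\mr\leq CKr$. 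Translating so that $b_{0}=0$, for each $i$ the identity $\ml u-b_{i}\mr^{2}-\ml u'-b_{i}\mr^{2}=(\ml u-b_{i}\mr-\ml u'-b_{i}\mr)(\ml u-b_{i}\mr+\ml u'-b_{i}\mr)$ gives $\bigl|\ml u-b_{i}\mr^{2}-\ml u'-b_{i}\mr^{2}\bigr|\leq CK\ep r\cdot CKr=CK^{2}\ep r^{2}$. Expanding the left side as $(\ml u\mr^{2}-\ml u'\mr^{2})-2\langle u-u',b_{i}\rangle$ and subtracting the $i=0$ case (where $b_{0}=0$) eliminates $\ml u\mr^{2}-\ml u'\mr^{2}$, so $\bigl|\langle u-u',b_{i}\rangle\bigr|\leq CK^{2}\ep r^{2}$ for $i=1,\dots,n$. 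Since $(b_{1},\dots,b_{n})$ span $\bbR^{n}$ with least singular value $\geq cr$ (fatness of the simplex), we conclude $\ml u-u'\mr\leq CK^{2}\ep r$, which is the desired estimate \eqref{eq:1.1}.

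I expect the only real subtlety to be keeping track of the two independent powers of $K$: the first appears in the propagation step because $\phi$ distorts distances multiplicatively and the distances $\ml y-p_{i}\mr$ in play are of size $\sim Kr$; the second appears in the final linear‑algebra step because one is pinning down $u$ relative to $u'$ using measurements taken from a reference simplex of size $r$ while $u$ and $u'$ themselves lie at distance $\sim Kr$, so the short baseline magnifies the error by a further factor $K$. Everything else is routine perturbation theory for Gram matrices and polar decompositions, valid once $\ep$ is small as in \ref{i:0.3}.
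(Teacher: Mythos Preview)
Your proof is correct and follows essentially the same approach as the paper's: build $A$ from a well-spread simplex in $B(z,r)$ by comparing Gram matrices (the paper uses Gram--Schmidt on $\phi(e_1),\dots,\phi(e_n)$ rather than a polar decomposition, but this is cosmetic), then propagate to $y\in B(z,Kr)\cap E$ by squaring the distance comparisons $\bigl||\phi(y)-A(p_i)|-|A(y)-A(p_i)|\bigr|\leq CK\ep r$ and subtracting to isolate $\langle \phi(y)-A(y),\,b_i\rangle$. The paper streamlines slightly by normalizing $z=0$, $r=1$, $\phi(0)=0$ and reducing to $A=I$, but the two arguments are otherwise line-for-line the same, including the mechanism by which the two factors of $K$ arise.
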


\begin{proof}
	Without loss of generality, we may assume $z=0$, $r=1$, and $\phi(0)=0$. Let $e_{1},\cdots,e_{n}$ be the unit vector in $\bbR^{n}$. By \eqref{i:0.1} we have $1-\ep \leq \ml \phi(e_{i}) \mr \leq 1+\ep$ for each $i$, and $(1-\ep)\sqrt{2} \leq \ml \phi(e_{i}) - \phi(e_{j}) \mr \leq (1+\ep)\sqrt{2}$ for $i \neq j$. Since $-2\phi(e_{i})\cdot\phi(e_{j}) = \ml \phi(e_{i}) - \phi(e_{j}) \mr^{2} - \ml \phi(e_{i}) \mr^{2} - \ml \phi(e_{j}) \mr^{2}$, it follows that
	\beq
		\label{eq:1.2}
		\ml \phi(e_{i}) \cdot \phi(e_{j}) - \delta_{ij} \mr \leq C\ep \text{ for each $i,j$, where $\delta_{ij}$ is the Kronecker delta function}.
	\eeq
	Let $A \in \OO(n)$ be the orthogonal matrix whose columns arise by applying the Gram-Schmidt process to the vectors $\phi(e_{1}),\,\phi(e_{2}),\,\cdots,\,\phi(e_{n})$. Then \eqref{eq:1.2} implies the estimate
	\[
		\ml \phi(e_{i}) - Ae_{i} \mr \leq C\ep \text{ for each $i$}.
	\]
	Replacing $\phi$ by $A^{-1}\circ\phi$, we may therefore assume without loss of generality that
	\beq
		\label{eq:1.3}
		\ml \phi(e_{i}) - e_{i} \mr \leq C\ep \text{ for each $i$}.
	\eeq
	Assume \eqref{eq:1.3}, and recalling that $\phi(0) = 0$, we will prove \eqref{eq:1.1} with $A = I$. Thus, let $K \geq 1$, and let $y \in B(0,K) \cap E$. By \eqref{i:0.1}, we have $(1-\ep)\ml y \mr \leq \ml \phi(y) \mr \leq (1+\ep)\ml y \mr$, hence
	\beq
		\label{eq:1.4}
		\Big| | \phi(y) | - | y | \Big| \leq \ep K.
	\eeq
	In particular,
	\beq
		\label{eq:1.5}
		\ml \phi(y) \mr \leq zK.
	\eeq
	Again applying \eqref{i:0.1}, we have
	\[
		(1-\ep)\ml y-e_{i} \mr \leq \ml \phi(y)-\phi(e_{i}) \mr \leq (1+\ep) \ml y-e_{i} \mr \text{ for each $i$}.
	\]
	Hence, by \eqref{eq:1.3} and \eqref{eq:1.5}, we have
	\beq
		\label{eq:1.6}
		\Big| |\phi(y)-e_{i}| - |y-e_{i}| \Big| \leq C\ep K \text{ for each $i$}.
	\eeq
	From \eqref{eq:1.4}, \eqref{eq:1.5}, \eqref{eq:1.6}, we see that
	\beq
		\label{eq:1.7}
		\Big| |\phi(y)|^{2} - |y|^{2} \Big| = \Big( |\phi(y)| + |y| \Big) \cdot \Big| |\phi(y)| - |y| \Big| \leq CK^{2}\ep,
	\eeq
	and similarly,
	\beq
		\label{eq:1.8}
		\Big| |\phi(y)-e_{i}|^{2} - |y-e_{i}|^{2} \Big| \leq CK^{2}\ep.
	\eeq
	Since
	\begin{align*}
		-2\phi(y) \cdot e_{i} &= |\phi(y)-e_{i}|^{2} - |\phi(y)|^{2} - 1 \text{ and} \\
		-2y \cdot e_{i} &= |y-e_{i}|^{2} - |y|^{2} - 1,
	\end{align*}
	it follows from \eqref{eq:1.7} and \eqref{eq:1.8} that
	\[
		\Big| \big[ \phi(y) - y \big] \cdot e_{i} \Big| \leq CK^{2}\ep \text{ for each $i$}.
	\]
	Consequently, $|\phi(y) - y| \leq CK^{2}e$, proving \eqref{eq:1.1} and $A =$ identity.
\end{proof}

When we apply Lemma \ref{l:1.1}, we will always take $K$ to be a controlled constant.

The proof of the following Lemma is straightforward, and may be left to the reader. Note that $\nabla A(x)$ is independent of $x \in \bbR^{n}$ when $A: \bbR^{n} \to \bbR^{n}$ is an affine map. We write $\nabla A$ in this case without indicating $x$.

%%% Lemma 1.2 %%%
\begin{lem}
	\label{l:1.2}
	\addcontentsline{toc}{subsection}{Lemma 1.2}
	Let $B(z,r)$ be a ball, let $A: \bbR^{n} \to \bbR^{n}$ be an affine map, and let $M > 0$ be a real number. If $|A(y)| \leq M$ for all $y \in B(z,r)$, then $|\nabla A| \leq CM/r$, and for any $K \geq 1$ and $y \in B(z,Kr)$ we have
	\[
		\big| A(y) \big| \leq CKM.
	\]
\end{lem}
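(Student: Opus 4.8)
The statement to prove is Lemma~\ref{l:1.2}: if an affine map $A$ satisfies $|A(y)| \le M$ on a ball $B(z,r)$, then $|\nabla A| \le CM/r$, and $|A(y)| \le CKM$ on the dilated ball $B(z,Kr)$.

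\medskip

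\textbf{Proof proposal.} The plan is to exploit that an affine map $A(x) = Lx + b$ is determined by its linear part $L = \nabla A$ and a translation $b$, and that both are controlled by the values of $A$ on any ball of positive radius. First I would reduce to a normalized situation: after translating the domain so that $z = 0$, write $A(x) = Lx + A(0)$ for the constant matrix $L = \nabla A$. The hypothesis gives $|A(0)| \le M$ (taking $y = z$) and $|L y + A(0)| \le M$ for all $|y| \le r$. Subtracting, $|Ly| \le 2M$ for all $y$ with $|y| \le r$; rescaling $y = r u$ with $|u| \le 1$ gives $|Lu| \le 2M/r$ for all unit-ball vectors $u$, i.e. the operator norm of $L$ is at most $2M/r$. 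Since all norms on the space of $n\times n$ matrices are equivalent with constants depending only on $n$, this yields $|\nabla A| = |L| \le CM/r$ with $C$ a controlled constant (indeed depending only on $n$).

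\medskip

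For the second assertion, given $K \ge 1$ and $y \in B(0, Kr)$, write $A(y) = L y + A(0)$ and estimate by the triangle inequality: $|A(y)| \le |L|\,|y| + |A(0)| \le (CM/r)(Kr) + M = CKM + M \le C'KM$, where in the last step I absorb the additive $M$ using $K \ge 1$ so that $M \le KM$. Undoing the initial translation of the domain (which does not affect $\nabla A$ and only relabels the center of the ball) completes the argument.

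\medskip

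\textbf{Main obstacle.} There is essentially no obstacle here — the lemma is elementary linear algebra, which is exactly why the authors leave it to the reader. The only point requiring a modicum of care is making explicit that the passage from ``$|Lu|$ bounded for all unit vectors $u$'' to ``$|\nabla A|$ bounded'' uses equivalence of matrix norms, so the constant $C$ is legitimately controlled (it depends only on $n$, hence is a controlled constant in the paper's terminology); and the bookkeeping that the additive constant $M$ in the second estimate is harmless precisely because $K \ge 1$.
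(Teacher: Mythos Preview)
Your argument is correct and is exactly the elementary computation the paper has in mind; indeed the authors explicitly leave this lemma to the reader, and your reduction to $z=0$, bound $|A(0)|\le M$, estimate $|Ly|\le 2M$ on $B(0,r)$ giving $|\nabla A|\le CM/r$, and then the triangle-inequality bound on $B(0,Kr)$ using $K\ge 1$, is the standard proof.
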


When we apply Lemma \ref{l:1.2}, we will always take $K$ to be a controlled constant.

%%% Lemma 1.3 %%%
\begin{lem}
	\label{l:1.3}
	\addcontentsline{toc}{subsection}{Lemma 1.3}
	For $\eta > 0$ small enough, we have
	\beq
		\label{eq:1.9}
		\big(1-C\ep\big)I \leq \Big(\nabla\phi(y)\Big)^{+}\Big(\nabla\phi(y)\Big) \leq (1+C\ep)I \text{ for all $y\in\bbR^{n}$ s.t. $d(y) < \eta$}.
	\eeq
\end{lem}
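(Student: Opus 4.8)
The statement asserts a pointwise bound on the derivative of $\phi$ at points $y$ close enough to $E$. The plan is to reduce everything to points of $E$ (where we have good control via the assumption \ref{i:0.1} and its consequence Lemma \ref{l:1.1}) using the geometric assumption \ref{i:0.2}, which guarantees that every point near $E$ sits inside a controlled-size ball fully contained in $E$. Here is how I would proceed. Fix $y$ with $d(y) < \eta$ for $\eta$ a small controlled constant to be chosen; in particular $d(y) \le c_0 \dia E$, so \ref{i:0.2} applies and yields a ball $B(z,r) \subset E$ with $|z - y| \le C_1 d(y)$ and $r \ge c_2 d(y)$. Rescale so that $z = 0$, $r = 1$ (and, after precomposing with a Euclidean motion as in Lemma \ref{l:1.1}, so that $\phi$ is close to the identity near $B(0,1)$). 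The goal is then to show $|\nabla\phi(y) - I|$ is at most $C\ep$ in operator norm, which immediately gives \eqref{eq:1.9}.

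The key step is to estimate $\nabla\phi(y)$ by a finite difference of $\phi$ over points of $E$. Since $B(z,r) \subset E$ and $y$ lies within a controlled multiple of $r$ of $z$, I can find $n+1$ points $w_0, w_1, \dots, w_n \in B(z,r) \subset E$ forming a simplex of controlled volume (e.g. $w_0 = z$, $w_i = z + \tfrac{1}{2} e_i$), all within a controlled distance of $y$. Applying Lemma \ref{l:1.1} with a controlled $K$, we get $|\phi(w_i) - w_i| \le C\ep$ for each $i$ (in the normalized coordinates). Because $\phi$ is $C^1$, Taylor's theorem gives $\phi(w_i) - \phi(y) = \nabla\phi(y)\,(w_i - y) + o(|w_i - y|)$; the trouble is that the error term here is not quantitative. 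I would instead argue as follows: the hypothesis is that $\phi$ is merely $C^1$, so a clean pointwise bound on $\nabla\phi(y)$ cannot come from finite differences alone. Rather, I expect Lemma \ref{l:1.3} to be established by combining Lemma \ref{l:1.1} (which controls $\phi$ itself on $E$) with a standard fact: a $C^1$ map whose restriction to a ball $B(z,r)\subset E$ is a $C\ep$-approximate isometry on that ball has derivative within $C\ep$ of an orthogonal matrix throughout that ball, and in particular at any interior point such as $y$. Concretely: \ref{i:0.1} shows $\phi\big|_{B(z,r)}$ satisfies $(1-\ep)|a-b| \le |\phi(a)-\phi(b)| \le (1+\ep)|a-b|$ for all $a,b \in B(z,r)$ (here we genuinely use $B(z,r) \subset E$, not just finitely many points of it), and then for any interior point $y$ of $B(z,r)$ — which our $y$ is, after possibly shrinking $\eta$ so that $d(y) < r/2$, say — one differentiates: for a unit vector $v$, $|\nabla\phi(y) v| = \lim_{t\to 0} |\phi(y+tv) - \phi(y)|/t \in [1-\ep, 1+\ep]$, and polarizing the quadratic form (as in the passage from \eqref{eq:1.7}--\eqref{eq:1.8} to the bound on $[\phi(y)-y]\cdot e_i$ in Lemma \ref{l:1.1}) gives $\big|(\nabla\phi(y))^{+}(\nabla\phi(y)) - I\big| \le C\ep$.

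So the structure is: (i) use \ref{i:0.2} with $\eta \le c_0 \dia E$ to place $y$ in a ball $B(z,r) \subset E$ of controlled relative size, and shrink $\eta$ further so $y$ is in the interior, e.g. $d(y) < \tfrac12 c_2\, d(y)$-type room — more precisely so that $B(y, \rho) \subset B(z,r) \subset E$ for some $\rho > 0$; (ii) apply \ref{i:0.1} on $B(z,r)\subset E$ to get the two-sided distance bound for $\phi$ restricted to that ball; (iii) differentiate at the interior point $y$ to read off $(1-\ep)|v| \le |\nabla\phi(y) v| \le (1+\ep)|v|$ for all $v$; (iv) polarize to convert this into the matrix inequality \eqref{eq:1.9} with a constant $C$ depending only on $n$. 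The main obstacle — and the only subtle point — is step (i): ensuring that after applying the geometric assumption \ref{i:0.2}, the point $y$ genuinely lies in the \emph{interior} of the ball contained in $E$, so that the differentiation in step (iii) is legitimate; this is where one needs to take $\eta$ sufficiently small relative to the controlled constants $c_0, C_1, c_2$, using the scale-invariance built into \ref{i:0.2} (the conclusion of \ref{i:0.2} gives $r \ge c_2 d(x)$, comfortably larger than $d(x)$ up to the constant $c_2$, but one must track whether $c_2 \ge 1$ or arrange matters by iterating/rescaling). Once that is handled, the rest is the same polarization computation already carried out in the proof of Lemma \ref{l:1.1}.
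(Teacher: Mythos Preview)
Your proposal has a genuine gap at exactly the point you flag as ``the only subtle point'': step (i) cannot be carried out. Assumption \ref{i:0.2} applies only to points $x\in\bbR^{n}\setminus E$, and the ball it produces satisfies $B(z,r)\subset E$. Hence if $0<d(y)<\eta$ (so $y\notin E$), then $y\notin B(z,r)$, and no choice of $\eta$ or iteration on $c_{2}$ can place $y$ in the interior of a ball contained in $E$. The differentiation in your step (iii) therefore cannot be done at such $y$ using only points of $E$, and assumption \ref{i:0.1} tells you nothing about pairs involving $y$ itself.

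The paper circumvents this by proving the bound \emph{first on $E$}, then extending. At interior points of $E$ your steps (ii)--(iii) go through verbatim (and no polarization is needed: from $(1-\ep)|v|\le|\nabla\phi(y)v|\le(1+\ep)|v|$ one squares directly). At a boundary point $y\in\partial E$, the paper picks $x\notin E$ arbitrarily close to $y$, applies \ref{i:0.2} to $x$ to obtain an \emph{interior} point $z\in E$ with $|z-y|\le(1+C_{1})|x-y|$, notes that the matrix bound holds at $z$, and lets $z\to y$ using $\phi\in C^{1}(U)$. This gives \eqref{eq:1.9} on all of $E$. Finally, since $E\subset U$ is compact and $\nabla\phi$ is continuous on $U$, uniform continuity pushes the bound out to $\{d(y)<\eta\}$ for $\eta$ small enough. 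The ingredient you are missing is this last compactness-and-continuity step: the constant $\eta$ is not controlled (it depends on the modulus of continuity of $\nabla\phi$), and the lemma does not claim otherwise.
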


\begin{proof}
	If $y$ is an interior point of $E$, then \eqref{eq:1.9} follows easily from \eqref{i:0.1}. Suppose $y$ is a boundary point of $E$. Arbitrarily close to $y$, we can find $x \in \bbR^{n} \backslash E$. Applying \eqref{i:0.2}, we obtain an interior point $z$ in $E$ such that $|z-x| \leq C_{1}d(x) \leq C_{1}|y-x|$, hence $|z-y| \leq (1+C_{1})|y-x|$. Since $z$ is an interior point of $E$, we have
	\beq
		\label{eq:1.10}
		\big(1-C\ep\big)I \leq \Big( \nabla\phi(z) \Big)^{+}\Big( \nabla\phi(z) \Big) \leq (1+C\ep),
	\eeq
	as observed above. However, we can make $|z-y|$ as small as we like here, simply by taking $|y-x|$ small enough. Since $\phi \in C^{1}(U)$, we may pass to the limit, and deduce \eqref{eq:1.9} from \eqref{eq:1.10}. Thus, \eqref{eq:1.9} holds for all $y \in E$. 	
	Since $E \subset U$ is compact and $\phi \in C^{1}(U)$, the lemma now follows.
\end{proof}

%%% Lemma 1.4 %%%
\begin{lem}
	\label{l:1.4}
	\addcontentsline{toc}{subsection}{Lemma 1.4}
	For $\eta > 0$ small enough, we have
	\[
		\Big| \phi(y) - \big[ \phi(x) + \nabla\phi(x)\cdot(y-x) \big] \Big| \leq \ep \big|y-x\big|
	\]
	for all $x,y \in U$ such that $d(x) \leq \eta$ and $|y-x| \leq \eta$.
\end{lem}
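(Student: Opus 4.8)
The plan is to prove Lemma~\ref{l:1.4} as a routine consequence of the $C^1$ regularity of $\phi$ on $U$ together with the uniform bound on $\nabla\phi$ near $E$ supplied by Lemma~\ref{l:1.3}. The statement is a first-order Taylor estimate with the tiny error $\ep|y-x|$, so morally it says that $\nabla\phi$ varies by at most $\ep$ on pairs of points that both lie within distance $\eta$ of $E$ and within distance $\eta$ of each other. Since $\phi\in C^1(U)$ and $E$ is compact, $\nabla\phi$ is uniformly continuous on a compact neighborhood of $E$ contained in $U$, so its oscillation over such pairs tends to $0$ as $\eta\to0$; the only subtlety is to make sure the segment joining $x$ to $y$ stays inside $U$, which it does once $\eta$ is smaller than the distance from $E$ to $\partial U$.

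First I would fix a compact set $K_0 = \{w\in\bbR^n : d(w)\le 1\}$ (shrinking the ``$1$'' if necessary so that $K_0\subset U$, which is possible since $E$ is compact and $E\subset U$ open). Then $\nabla\phi$ is uniformly continuous on $K_0$, so given $\ep>0$ there is $\eta_0>0$ such that $|\nabla\phi(x')-\nabla\phi(x'')|\le\ep$ whenever $x',x''\in K_0$ with $|x'-x''|\le\eta_0$. Take $\eta = \min(\eta_0, \tfrac12)$ (small enough that if $d(x)\le\eta$ and $|y-x|\le\eta$ then the whole segment $[x,y]$ lies in $K_0\subset U$; indeed every point $x+t(y-x)$, $t\in[0,1]$, has distance to $E$ at most $d(x)+|y-x|\le 2\eta\le1$).

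Next I would write, for such $x,y$, the exact integral form of the first-order remainder:
\[
	\phi(y) - \big[\phi(x) + \nabla\phi(x)\cdot(y-x)\big] = \int_0^1 \Big[\nabla\phi\big(x+t(y-x)\big) - \nabla\phi(x)\Big]\cdot(y-x)\,dt,
\]
which is valid because $[x,y]\subset U$ and $\phi$ is $C^1$ there. Since $|x+t(y-x)-x| = t|y-x|\le\eta\le\eta_0$ and both endpoints of that displacement lie in $K_0$, the bracketed factor has norm at most $\ep$ for every $t\in[0,1]$; hence the integral is bounded in norm by $\ep|y-x|$, which is exactly the claimed inequality.

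The only thing resembling an obstacle is bookkeeping about the radius in the definition of $K_0$: one must check the compact neighborhood of $E$ on which we invoke uniform continuity is genuinely inside $U$, and that $\eta$ can be chosen simultaneously small enough for that containment, for the segment-stays-in-$U$ property, and for the oscillation bound. All of these are compatible — each is an upper bound on $\eta$ — so taking the minimum finishes the argument. (Lemma~\ref{l:1.3} is not strictly needed for this particular estimate, since uniform continuity of $\nabla\phi$ on $K_0$ already gives control of the oscillation; I mention it only because it is the natural companion statement and could alternatively be used to phrase the bound in terms of $\nabla\phi(x)^+\nabla\phi(x)$.)
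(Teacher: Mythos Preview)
Your proof is correct and follows essentially the same approach as the paper: compactness of $E$ with $\phi\in C^1(U)$ gives uniform continuity of $\nabla\phi$ on a neighborhood of $E$, and then the fundamental theorem of calculus (integral remainder) yields the bound. The paper's version is just a two-sentence sketch of exactly this argument, so your more careful bookkeeping about $K_0$ and the segment staying in $U$ is a faithful expansion of it.
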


\begin{proof}
	If $\eta$ is small enough and $d(x) \leq \eta$, then $B(x,\eta) \subset U$ and $|\nabla\phi(y) - \nabla\phi(x)| \leq \ep$ for all $y \in B(x,\eta)$. (These remarks follow from the fact that $E \subset U$ is compact and $\phi \in C^{1}(U)$.)
	
	The lemma now follows from the fundamental theorem of calculus.
\end{proof}

%%% Lemma 1.5 %%%
\begin{lem}
	\label{l:1.5}
	\addcontentsline{toc}{subsection}{Lemma 1.5}
	Let $\Psi: \bbR^{n} \to \bbR^{n}$ be a $C^{1}$ map. Assume that $\det\nabla\Psi\neq 0$ everywhere on $\bbR^{n}$, and assume that $\Psi$ agrees with a Euclidean motion outside a ball B. Then $\Psi: \bbR^{n} \to \bbR^{n}$ is one-to-one and onto.
\end{lem}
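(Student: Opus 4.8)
The plan is to show that $\Psi$ is a proper local diffeomorphism and then to argue that the function $y \mapsto \#\Psi^{-1}(y)$ is constant on $\bbR^{n}$ with value $1$; this single fact gives simultaneously that $\Psi$ is onto (every fiber is nonempty) and one-to-one (every fiber is a singleton). First, since $\det\nabla\Psi \neq 0$ everywhere, the inverse function theorem shows that $\Psi$ is a local $C^{1}$-diffeomorphism; in particular $\Psi$ is an open map and each fiber $\Psi^{-1}(y)$ is discrete in $\bbR^{n}$.

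Next I would verify that $\Psi$ is proper, i.e.\ $\Psi^{-1}(K)$ is compact for every compact $K\subset\bbR^{n}$. Write $\Psi = A_{\infty}$ outside a closed ball $\overline{B}$, where $A_{\infty}(x)=Tx+x_{0}$ with $T\in O(n)$. Given compact $K$, the set $\Psi^{-1}(K)$ is closed by continuity, and $\Psi^{-1}(K)\subseteq \overline{B}\cup A_{\infty}^{-1}(K)$; since $A_{\infty}^{-1}$ is an affine bijection, $A_{\infty}^{-1}(K)$ is compact, so $\Psi^{-1}(K)$ is closed and bounded, hence compact. In particular every fiber $\Psi^{-1}(y)$ is compact and discrete, therefore \emph{finite}.

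The main step, and the only place requiring care, is to show that $y\mapsto\#\Psi^{-1}(y)$ is locally constant. Fix $y_{0}$ and write $\Psi^{-1}(y_{0})=\{x_{1},\dots,x_{k}\}$. Using the local diffeomorphism property, choose pairwise disjoint open sets $U_{i}\ni x_{i}$ with $\Psi|_{U_{i}}$ a homeomorphism onto an open set containing $y_{0}$. Pick a closed ball $\overline{D}\ni y_{0}$; by properness $\Psi^{-1}(\overline{D})$ is compact, hence $F:=\Psi^{-1}(\overline{D})\setminus\bigcup_{i}U_{i}$ is compact, and $y_{0}\notin\Psi(F)$ since $\Psi^{-1}(y_{0})\subset\bigcup_{i}U_{i}$. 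Therefore there is an open ball $V\ni y_{0}$ with $V\subset\mathrm{int}\,\overline{D}$, $V\cap\Psi(F)=\emptyset$, and $V\subset\bigcap_{i}\Psi(U_{i})$. For $y\in V$ one checks that $\Psi^{-1}(y)\subset\bigcup_{i}U_{i}$ (a preimage in $F$ would put $y\in\Psi(F)$) and that $\Psi^{-1}(y)$ meets each $U_{i}$ in exactly one point (bijectivity of $\Psi|_{U_{i}}$ and $y\in\Psi(U_{i})$), so $\#\Psi^{-1}(y)=k$ for all $y\in V$. Thus $\#\Psi^{-1}(\cdot)$ is locally constant, hence constant on the connected set $\bbR^{n}$. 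I expect this compactness bookkeeping to be the main obstacle; it is essentially the standard fact that a proper local homeomorphism is a covering map, which one could instead simply cite.

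Finally I would identify the constant by evaluating at a far-away point. Since $\Psi(\overline{B})$ is compact and $A_{\infty}$ is an affine isometry, so that $|A_{\infty}^{-1}(y^{*})|\to\infty$ as $|y^{*}|\to\infty$, we may choose $y^{*}$ with $y^{*}\notin\Psi(\overline{B})$ and $A_{\infty}^{-1}(y^{*})\notin\overline{B}$. Then any $x\in\Psi^{-1}(y^{*})$ must lie outside $\overline{B}$ (otherwise $y^{*}\in\Psi(\overline{B})$), so $\Psi(x)=A_{\infty}(x)=y^{*}$ forces $x=A_{\infty}^{-1}(y^{*})$; and this point does belong to the fiber. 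Hence $\#\Psi^{-1}(y^{*})=1$, so $\#\Psi^{-1}(y)=1$ for all $y\in\bbR^{n}$, which is exactly the assertion that $\Psi$ is one-to-one and onto. (As an alternative to the last two paragraphs, one can use Brouwer degree theory: properness makes $\deg\Psi$ well defined and locally constant in the target, the constant sign of $\det\nabla\Psi$ on connected $\bbR^{n}$ gives $\deg\Psi=\pm\,\#\Psi^{-1}(y)$ for every $y$, and evaluating at $y^{*}$ yields $\deg\Psi=\pm1$.)
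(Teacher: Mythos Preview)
Your argument is correct. You recast the problem as showing that $\Psi$ is a proper local diffeomorphism and then verifying that the fiber-cardinality function is locally constant with value $1$; the properness check and the choice of the far-away point $y^{*}$ are both clean, and the ``stack of records'' bookkeeping is carried out carefully (including, implicitly, the case $k=0$).

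The paper proceeds differently. After normalizing so that $\Psi(x)=x$ for $|x|\ge 1$, it treats surjectivity and injectivity separately: it shows $\Psi(\bbR^{n})$ is nonempty, open, and closed by an elementary sequence argument, and then shows that the ``non-injectivity set'' $Y=\{y':\Psi(y')=\Psi(y'')\text{ for some }y''\ne y'\}$ is open (inverse function theorem), bounded, and closed (again by sequences, using that a limiting coincidence $y'_{\infty}=y''_{\infty}$ would violate local invertibility), hence empty. Your route is more conceptual and unified---one locally-constant integer invariant handles both conclusions, and you could indeed shorten it by citing that a proper local homeomorphism between manifolds is a covering map. The paper's route is more hands-on: it never names properness or covering maps and instead runs two short open--closed--connected arguments, which keeps the proof entirely self-contained at the cost of a little repetition.
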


\begin{proof}
	Without loss of generality, we may suppose $\Psi(x) = x$ for $|x| \geq 1$. First we show that $\Psi$ is onto. Since $\det\nabla\Psi \neq 0$, we know that $\Psi(\bbR^{n})$ is open, and of course $\Psi(\bbR^{n})$ is non-empty. If we can show that $\Psi(\bbR^{n})$ is closed, then it follows that $\Psi(\bbR^{n}) = \bbR^{n}$, i.e., $\Psi$ is onto.
	
	Let $\{x_{\nu}\}_{\nu \geq 1}$ be a sequence converging to $x_{\infty} \in \bbR^{n}$, with each $x_{\nu} \in \Psi(\bbR^{n})$. We show that $x_{\infty} \in \Psi(\bbR^{n})$. Let $x_{\nu} = \Psi(y_{\nu})$. If infinitely many $y_{\nu}$ satisfy $|y_{\nu}| \geq 1$, then infinitely many $x_{\nu}$ satisfy $|x_{\nu}| \geq 1$, since $x_{\nu} = \Psi(y_{\nu}) = y_{\nu}$ for $|y_{\nu}| \geq 1$. Hence, $|x_{\infty}| \geq 1$ in this case, and consequently
	\[
		x_{\infty} = \Psi(x_{\infty}) \in \Psi(\bbR^{n}).
	\]
	
	On the other hand, if only finitely many $y_{\nu}$ satisfy $|y_{\nu}| \geq 1$, then there exists a convergent subsequence $y_{\nu_{i}} \to y_{\infty}$ as $i \to \infty$. In this case, we have
	\[
		x_{\infty} = \lim_{i \to \infty}\Psi(y_{\nu_{i}}) = \Psi(y_{\infty}) \in \Psi(\bbR^{n}).
	\]
	
	Thus, in all cases, $x_{\infty} \in \Psi(\bbR^{n})$. This proves that $\Psi(\bbR^{n})$ is closed, and therefore $\Psi: \bbR^{n} \to \bbR^{n}$ is onto.
	
	Let us show that $\Psi$ is one-to-one. We know that $\Psi$ is bounded on the unit ball. Fix $M$ such that
	\beq
		\label{eq:1.11}
		\big|\Psi(y)\big| \leq M \ \text{for} \ |y| \leq 1.
	\eeq
	We are assuming that $\Psi(y) = y$ for $|y| \geq 1$. For $|x| > \max(M,1)$, it follows that $y=x$ is the only point $y \in \bbR^{n}$ such that $\Psi(y) = x$. Now let $Y = \{ y'\in\bbR^{n}: \Psi(y') = \Psi(y'') \text{ for some } y'' \neq y' \}$. The set $Y$ is bounded, thanks to \eqref{eq:1.11}. Also, the inverse function theorem shows that $Y$ is open. We will show that $Y$ is closed. This implies that $Y$ is empty, proving that $\Psi: \bbR^{n} \to \bbR^{n}$ is one-to-one.
	
	Thus, let $\{y_{\nu}'\}_{\nu \geq 1}$ be a convergent sequence, with each $y_{\nu}' \in Y$; suppose $y_{\nu}' \to y_{\infty}'$ as $\nu \to \infty$. We will prove that $y_{\infty}' \in Y$.
	
	For each $\nu$, pick $y_{\nu}'' \neq y_{\nu}'$ such that
	\[
		\Psi(y_{\nu}'') = \Psi(y_{\nu}').
	\]
	Each $y_{\nu}''$ satisfies $|y_{\nu}''| \leq \max(M.1)$, thanks to \eqref{eq:1.11}.
	
	Hence, after passing to a subsequence, we may assume $y_{\nu}'' \to y_{\infty}''$ as $\nu \to \infty$. We already know that $y_{\nu}' \to y_{\infty}'$ as $\nu \to \infty$.
	
	Suppose $y_{\infty}' = y_{\infty}''$. Then arbitrarily near $y_{\infty}'$ there exist pairs $y_{\nu}',\,y_{\nu}''$, with $y_{\nu}' \neq y_{\nu}''$ and $\Psi(y_{\nu}') = \Psi(y_{\nu}'')$. This contradicts the inverse function theorem, since $\det\nabla\Psi(y_{\infty}') \neq 0$.
	
	Consequently, we must have $y'_{\infty} \neq y''_{\infty}$. Recalling that $\Psi(y'_{\nu}) = \Psi(y''_{\nu})$, and passing to the limit, we see that $\Psi(y'_{\infty}) = \Psi(y''_{\infty})$.
	
	By definition, we therefore have $y'_{\infty} \in Y$, proving that $Y$ is closed, as asserted above. Hence, $Y$ is empty, and $\Psi: \bbR^{n} \to \bbR^{n}$ is one-to-one.
\end{proof}

From now on, we assume without loss of generality that
\beq
	\label{eq:1.12}
	\dia E = 1.
\eeq
%%CK
%...................start here.---removed

%%%%%%%%%% Section: Whitney Technology %%%%%%%%%%
\section{\textsc{Whitney Technology}} \label{s:2}
\setcounter{thm}{0}
\setcounter{equation}{0}
From the standard proof [\cite{F3}] of the Whitney extension theorem, we obtain the following,

%%% Whitney cubes %%%
\ul{Whitney cubes}: $\bbR^{n} \backslash E$ is partitioned into ``Whitney cubes'' $\{Q_{\nu}\}$. We write $\beta_{\nu}$ to denote the sidelength of $Q_{\nu}$, and we write $Q_{\nu}^{*}$ to denote the cube $Q_{\nu}$, dilated about its center by a factor of 3. The Whitney cubes have the following properties,
\beq
	\label{eq:2.1} \text{$c\beta_{\nu} \leq d(x) \leq C\beta_{\nu}$ for all $x \in Q_{\nu}^{*}$}.
\eeq
\beq
	\label{eq:2.2} \text{Any given $x \in \bbR^{n}$ belongs to $Q_{\nu}^{*}$ for at most $C$ distinct $\nu$}.
\eeq

%%% Whitney partition of unity %%%
\ul{Whitney partition of unity}: For each $Q_{\nu}$, we have a cutoff function $\Theta_{\nu} \in C^{\infty}(\bbR^{n})$, with the following properties,
\beq
%%CK what does a positive cube mean? --corrected
	\label{eq:2.3} \text{$\Theta_{\nu} \geq 0$ on $\bbR^{n}$}.
\eeq
\beq
	\label{eq:2.4} \text{supp}\,\Theta_{\nu} \subset Q_{\nu}^{*}.
\eeq
\beq
	\label{eq:2.5} \text{$|\nabla\Theta_{\nu}| \leq C\beta_{\nu}^{-1}$ on $\bbR^{n}$}.
\eeq
\beq
	\label{eq:2.6} \text{$\sum_{\nu}\Theta_{\nu} = 1$ on $\bbR^{n} \backslash E$}.
\eeq

%%% Regularized Distance %%%
%%CK perhaps use a different function name, since delta is used above as a constant? unless there is some convention/reason I am not aware of?
% good point
\ul{Regularized Distance}: A function $\delta(x)$, defined on $\bbR^{n}$, has the following properties,
\beq
	\label{eq:2.7} \text{$cd(x) \leq \delta(x) \leq Cd(x)$ for all $x \in \bbR^{n}$}.
\eeq
\beq
	\label{eq:2.8} \text{$\delta(\cdot)$ belongs to $C_{\text{loc}}^{\infty}(\bbR^{n} \backslash E)$}.
\eeq
\beq
	\label{eq:2.9} \text{$|\nabla\delta(x)| \leq C$ for all $x \in \bbR^{n} \backslash E$}.
\eeq

Thanks to \eqref{eq:2.1} and \eqref{eq:2.7}, the following holds,
\beq
	\label{eq:2.10}
	\left[\begin{array}{l}
		\text{Let $x \in \bbR^{n}$, and let $Q_{\nu}$ be one of the Whitney cubes}. \\
		\text{If $d(x) \geq c_{0}$ and $x \in Q_{\nu}^{*}$, then $\beta_{\nu} > c_{3}$}.
	\end{array}\right.
\eeq

We bring \ref{i:0.2} into the picture. Recall that $\dia E = 1$.

Let $Q_{\nu}$ be a Whitney cube such that $\beta_{\nu} \leq c_{3}$. Then $d(x) < c_{0}$ for all $x \in Q_{\nu}^{*}$, as we see from \eqref{eq:2.10}. Let $x_{\nu}$ be the center of $Q_{\nu}$. Since $d(x_{\nu}) < c_{0}$, we may apply \ref{i:0.2} with $x = x_{\nu}$.

Thus, we obtain a ball
\beq
	\label{eq:2.11} B(z_{\nu},r_{\nu}) \subset E,
\eeq
such that
\beq
	\label{eq:2.12} cd(x_{\nu}) < r_{\nu} \leq Cd(x_{\nu}),
\eeq
and
\beq
	\label{eq:2.13} |z_{\nu} - x_{\nu}| \leq Cd(x_{\nu}).
\eeq

The ball $B(z_{\nu},r_{\nu})$ has been defined whenever $\beta_{\nu} \leq c_{3}$. \big(To see that $r_{\nu} \leq Cd(x_{\nu})$, we just note that $B(z_{\nu},r_{\nu}) \subset E$ but $x_{\nu} \notin E$; hence $|z_{\nu}-x_{\nu}| > r_{\nu}$, and therefore \eqref{eq:2.13} implies $r_{\nu} \leq Cd(x_{\nu})$.\big)

From \eqref{eq:2.12}, \eqref{eq:2.13} and \eqref{eq:2.1}, \eqref{eq:2.7}, we learn the following,
\beq
	\label{eq:2.14} Q_{\nu}^{*} \subset B(z_{\nu},Cr_{\nu}).
\eeq
\beq
	\label{eq:2.15} \text{$c\delta(x) < r_{\nu} < C\delta(x)$ for any $x \in Q_{\nu}^{*}$}.
\eeq
\beq
	\label{eq:2.16} \text{$|z_{\nu} - x| \leq C\delta(x)$ for any $x \in Q_{\nu}^{*}$}.
\eeq

These results \big(and \eqref{eq:2.11}\big) in turn imply the following,
\beq
	\label{eq:2.17} \text{Let $x \in Q_{\mu}^{*} \cap Q_{\nu}^{*}$. Then $B(z_{\nu},r_{\nu}) \subset B(z_{\mu},Cr_{\mu}) \cap E$}.
\eeq
Here, \eqref{eq:2.14}, \eqref{eq:2.15}, \eqref{eq:2.16} hold whenever $\beta_{\nu} \leq c_{3}$; while \eqref{eq:2.17} holds whenever $\beta_{\mu},\beta_{\nu} \leq c_{3}$.

We want an analogue of $B(z_{\nu},r_{\nu})$ for Whitney cubes $Q_{\nu}$ such that $\beta_{\nu} > c_{3}$.

There exists $x \in \bbR^{n}$ such that $d(x) = c_{0}/2$. Applying \ref{i:0.2} to this $x$, we obtain a ball
\beq
	\label{eq:2.18} B(z_{\infty}, r_{\infty}) \subset E,
\eeq
such that
\beq
	\label{eq:2.19} c < r_{\infty} \leq 1/2.
\eeq
(We have $r_{\infty} \leq 1/2$, simply because $\dia E = 1$.)

From \eqref{eq:2.18}, \eqref{eq:2.19} and the fact that $\dia E = 1$, we conclude that
\beq
	\label{eq:2.20} E \subset B(z_{\infty}, Cr_{\infty}).
\eeq

%%%%%%%%%% Section: Picking Euclidean Motions %%%%%%%%%%
\section{\textsc{Picking Euclidean Motions}} \label{s:3}
\setcounter{thm}{0}
\setcounter{equation}{0}
For each Whitney cube $Q_{\nu}$, we pick a Euclidean motion $A_{\nu}$, as follows,
\vspace{-1em}
\begin{description}[leftmargin=0em]
	\item[Case I] (``Small'' $Q_{\nu}$). Suppose $\beta_{\nu} \leq c_{3}$. Applying Lemma \ref{l:1.1} to the ball $B(z_{\nu},r_{\nu})$, we obtain a Euclidean motion $A_{\nu}$ with the following property.
	\beq
		\label{eq:3.1}
		\text{For $K \geq 1$ and $y \in B(z_{\nu},Kr_{\nu}) \cap E$, we have $\big| \phi(y) - A_{\nu}(y) \big| \leq CK^{2}\ep r_{\nu}$}.
	\eeq
	\item[Case II] (``Not-so-small'' $Q_{\nu}$). Suppose $\beta_{\nu} > c_{3}$. Applying Lemma \ref{l:1.1} to the ball $B(z_{\infty},r_{\infty})$, we obtain a Euclidean motion $A_{\infty}$ with the following property.
	\beq
		\label{eq:3.2}
		\text{For $K \geq 1$ and $y \in B(z_{\infty},Kr_{\infty}) \cap E$, we have $\big| \phi(y) - A_{\infty}(y) \big| \leq CK^{2}\ep r_{\infty}$}.
	\eeq
	In case II, we define
	\beq
		\label{eq:3.3}
		%%CK is this a typo or am I just lost? why two cases, then? the r_x's?---this fine.
		A_{\nu} = A_{\infty}.
	\eeq
	Thus, $A_{\nu} = A_{\nu'}$ whenever $\nu$ and $\nu'$ both fall into Case II. Note that \eqref{eq:3.2} together with \eqref{eq:2.19} and 
\eqref{eq:2.20} yield the estimate
	\beq
		\label{eq:3.4}
		\big| \phi(y) - A_{\infty}(y) \big| \leq C\ep \text{ for all } y \in E.
	\eeq
\end{description}

The next result establishes the mutual consistency of the $A_{\nu}$.

%%% Lemma 3.1 %%%
\begin{lem}
	\label{l:3.1}
	\addcontentsline{toc}{subsection}{Lemma 3.1}
	For $x \in Q_{\mu}^{*} \cap Q_{\nu}^{*}$, we have
	\beq
		\label{eq:3.5}
		\big| A_{\mu}(x) - A_{\nu}(x) \big| \leq C\ep\delta(x),
	\eeq
	and
	\beq
		\label{eq:3.6}
		\big| \nabla A_{\mu} - \nabla A_{\nu} \big| \leq C\ep.
	\eeq
\end{lem}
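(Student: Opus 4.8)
\textbf{Proof proposal for Lemma \ref{l:3.1}.}

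The plan is to reduce everything to an application of Lemma \ref{l:1.1} on a single large ball contained in $E$, together with Lemma \ref{l:1.2} to convert sup-norm control of an affine map into control of its gradient. The key point is that whenever $x\in Q_\mu^*\cap Q_\nu^*$, the two local balls $B(z_\mu,r_\mu)$ and $B(z_\nu,r_\nu)$ (or $B(z_\infty,r_\infty)$, in Case II) are mutually comparable in size and position: both are contained in $E$, both have radius $\sim\delta(x)$, and both centers lie within $C\delta(x)$ of $x$. Concretely, I would first record, from \eqref{eq:2.15}, \eqref{eq:2.16} (and their Case II analogues via \eqref{eq:2.19}, \eqref{eq:2.20}), that there is a single ball $B=B(z_\nu,Cr_\nu)\subset E$ with $B\supset B(z_\mu,r_\mu)$ and with $Q_\mu^*\cup Q_\nu^*\subset B(z_\nu, C'r_\nu)$; the containment $B(z_\mu,r_\mu)\subset B(z_\nu,Cr_\nu)$ is exactly \eqref{eq:2.17}. (In Case II one replaces the relevant ball by $B(z_\infty,r_\infty)$ and uses \eqref{eq:2.20}; since $r_\infty\sim 1\sim\delta(x)$ here because $\delta_\nu>c_3$, the same comparabilities hold.)

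Next I would compare $A_\mu$ and $A_\nu$ on $B(z_\nu,r_\nu)$. By \eqref{eq:3.1} (or \eqref{eq:3.4} in Case II) applied with a controlled $K$, for every $y\in B(z_\nu,r_\nu)\cap E$ we have $|A_\nu(y)-\phi(y)|\le C\ep\, r_\nu$; and since $B(z_\nu,r_\nu)\subset B(z_\mu,Cr_\mu)\cap E$ by \eqref{eq:2.17}, also $|A_\mu(y)-\phi(y)|\le C\ep\, r_\mu \le C\ep\, r_\nu$ (using $r_\mu\le C\delta(x)\le C'r_\nu$). Hence
\[
 \big| A_\mu(y) - A_\nu(y) \big| \le C\ep\, r_\nu \quad\text{for all } y\in B(z_\nu,r_\nu)\cap E.
\]
Since $B(z_\nu,r_\nu)\subset E$ is a genuine ball (not just a discrete set), this inequality holds on the whole ball $B(z_\nu,r_\nu)$. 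Now $A_\mu-A_\nu$ is an affine map bounded by $C\ep r_\nu$ on a ball of radius $r_\nu$, so Lemma \ref{l:1.2} gives $|\nabla A_\mu-\nabla A_\nu|\le C\ep$, which is \eqref{eq:3.6}. Finally, for the point $x\in Q_\mu^*\cap Q_\nu^*$ we have $x\in B(z_\nu,C'r_\nu)$ by \eqref{eq:2.14}, so the second conclusion of Lemma \ref{l:1.2} (with a controlled $K$) yields $|A_\mu(x)-A_\nu(x)|\le C\ep r_\nu\le C\ep\,\delta(x)$ by \eqref{eq:2.15}, which is \eqref{eq:3.5}.

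The only real subtlety — the step I would be most careful about — is the bookkeeping across the Case I / Case II boundary: one must check that when (say) $\delta_\mu\le c_3<\delta_\nu$, the cubes $Q_\mu^*$ and $Q_\nu^*$ can still overlap only when $\delta_\mu\sim\delta_\nu\sim 1$ (forced by \eqref{eq:2.1} since both contain the common point $x$ with $d(x)\sim\delta_\mu\sim\delta_\nu$), so that $r_\mu\sim r_\infty\sim 1\sim\delta(x)$ and the ball $B(z_\infty,Cr_\infty)$ contains all of $E$ and in particular $B(z_\mu,r_\mu)$ and $Q_\mu^*\cup Q_\nu^*$. Once one observes that overlapping starred cubes always have comparable sidelengths, all cases collapse to the single argument above; everything else is the routine juggling of the controlled constants already set up in Sections \ref{s:2} and \ref{s:3}.
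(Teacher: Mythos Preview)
Your proposal is correct and follows essentially the same approach as the paper: compare $A_\mu$ and $A_\nu$ to $\phi$ on the ball $B(z_\nu,r_\nu)\subset E$ via \eqref{eq:3.1}/\eqref{eq:3.4} and \eqref{eq:2.17}, then invoke Lemma~\ref{l:1.2} and \eqref{eq:2.14}, \eqref{eq:2.15} to pass to $x$ and to the gradient. The paper organizes this as four explicit cases (both small; mixed; both large), whereas you present it as one argument with the Case~I/Case~II bookkeeping relegated to a remark at the end --- but the substance is identical; one small slip is the claim $B(z_\nu,Cr_\nu)\subset E$ in your first paragraph (only $B(z_\nu,r_\nu)\subset E$ is guaranteed), though you never actually use that containment and your second paragraph works on the correct ball.
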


\begin{proof}
	We proceed by cases.
	\vspace{-1em}
	\begin{description}[leftmargin=0em]
		\item[Case 1:] Suppose $\beta_{\mu},\,\beta_{\nu} \leq c_{3}$. Then $A_{\nu}$ satisfies \eqref{eq:3.1}, and $A_{\mu}$ satisfies the analogous condition for $B(z_{\mu},r_{\mu})$. Recalling \eqref{eq:2.17}, we conclude that
		\beq
			\label{eq:3.7}
			%%CK should this be B(z_{\mu},r_{\mu})??---yes thanks
			\big| \phi(y) - A_{\mu}(y) \big| \leq C\ep r_{\mu} \text{ for } y \in B(z_{\nu},r_{\nu}),
		\eeq
		and
		\beq
			\label{eq:3.8}
			\big| \phi(y) - A_{\nu}(y) \big| \leq C\ep r_{\nu} \text{ for } y \in B(z_{\nu},r_{\nu}).
		\eeq
		Moreover, \eqref{eq:2.15} gives
		\beq
			\label{eq:3.9}
			c\delta(x) < r_{\mu} < C\delta(x) \text{ and } c\delta(x) < r_{\nu} < C\delta(x).
		\eeq
		By \eqref{eq:3.7}, \eqref{eq:3.8}, \eqref{eq:3.9}, we have
		\beq
			\label{eq:3.10}
			\big| A_{\mu}(y) - A_{\nu}(y) \big| \leq C\ep r_{\nu} \text{ for } y \in B(z_{\nu},r_{\nu}).
		\eeq
		Now, $A_{\mu}(y) - A_{\nu}(y)$ is an affine function. Hence, Lemma \ref{l:1.2} and inclusion \eqref{eq:2.14} allow us to deduce from \eqref{eq:3.10} that
		\beq
			\label{eq:3.11}
			\big|  A_{\mu}(y) - A_{\nu}(y) \big| \leq C\ep r_{\nu} \text{ for all } y \in Q_{\nu}^{*},
		\eeq
		and
		\beq
			\label{eq:3.12}
			\big| \nabla A_{\mu} - \nabla A_{\nu} \big| \leq C\ep.
		\eeq
		Since $x \in Q_{\nu}^{*}$, the desired estimates \eqref{eq:3.5}, \eqref{eq:3.6} follow at once from \eqref{eq:3.9}, \eqref{eq:3.11} and \eqref{eq:3.12}. Thus, Lemma \ref{l:3.1} holds in Case 1.
		
		\item[Case 2:] Suppose $\beta_{\nu} \leq c_{3}$ and $\beta_{\mu} > c_{3}$. Then by \eqref{eq:3.1} and \eqref{eq:2.11}, $A_{\nu}$ satisfies
		\beq
			\label{eq:3.13}
			\big| \phi(y) - A_{\nu}(y) \big| \leq C\ep r_{\nu} \text{ for }  y \in B(z_{\nu},r_{\nu});
		\eeq
		whereas $A_{\mu} = A_{\infty}$, so that \eqref{eq:3.4} and \eqref{eq:2.11} give
		\beq
			\label{eq:3.14}
			\big| \phi(y) - A_{\mu}(y) \big| \leq C\ep \text{ for all } y \in B(z_{\nu},r_{\nu}).
		\eeq
		Since $x \in Q_{\mu}^{*} \cap Q_{\nu}^{*}$, \eqref{eq:2.1} and \eqref{eq:2.7} give
		\[
			c\delta(x) \leq \beta_{\mu} \leq C\delta(x) \text{ and } c\delta(x) \leq \beta_{\nu} \leq C\delta(x).
		\]
		In this case, we have also $\beta_{\nu} \leq c_{3}$ and $\beta_{\mu} > c_{3}$. Consequently,
		\beq
			\label{eq:3.15}
			c < \beta_{\mu} < C, c < \beta_{\nu} < C, \text{ and } c < \delta(x) < C.
		\eeq
		By \eqref{eq:2.15}, we have also
		\beq
			\label{eq:3.16}
			c < r_{\nu} < C.
		\eeq
		
		From \eqref{eq:3.13}, \eqref{eq:3.14}, \eqref{eq:3.16}, we see that
		\beq
			\label{eq:3.17}
			\big|  A_{\mu}(y) - A_{\nu}(y) \big| \leq C\ep \text{ for all } y \in B_{\nu}(z_{\nu},r_{\nu}).
		\eeq
		
		Lemma \ref{l:1.2}, estimate \eqref{eq:3.16} and inclusion \eqref{eq:3.14} let us deduce from \eqref{eq:3.17} that
		\beq
			\label{eq:3.18}
			\big|  A_{\mu}(y) - A_{\nu}(y) \big| \leq C\ep \text{ for all } y \in Q_{\nu}^{*},
		\eeq
		and
		\beq
			\label{eq:3.19}
			\big| \nabla A_{\mu} - \nabla A_{\nu} \big| \leq C\ep.
		\eeq
		Since $x \in Q_{\nu}^{*}$, the desired estimates \eqref{eq:3.5}, \eqref{eq:3.6} follow at once from \eqref{eq:3.15}, \eqref{eq:3.18}, \eqref{eq:3.19}. Thus, Lemma \ref{l:3.1} holds in Case 2.
		
		\item[Case 3:] Suppose $\beta_{\nu} > c_{3}$ and $\beta{\mu} \leq c_{3}$. Reversing the roles of $Q_{\mu}$ and $Q_{\nu}$, we reduce matters to Case 2. Thus, Lemma \ref{l:3.1} holds in Case 3.
		
		\item[Case 4:] Suppose $\beta_{\mu},\,\beta_{\nu} > c_{3}$. Then by definition $A_{\mu} = A_{\nu} = A_{\infty}$, and estimates \eqref{eq:3.5}, \eqref{eq:3.6} hold trivially. Thus, Lemma \ref{l:3.1} holds in Case 4.
	\end{description}
	
	We have proved the desired estimates \eqref{eq:3.5}, \eqref{eq:3.6} in all cases.
\end{proof}

The following lemma shows that $A_{\nu}$ closely approximates $\phi$ on $Q_{\nu}^{*}$ when $Q_{\nu}^{*}$ lies very close to $E$.

%%% Lemma 3.2 %%%
\begin{lem}
	\label{l:3.2}
	\addcontentsline{toc}{subsection}{Lemma 3.2}
	For $\eta > 0$ small enough, the following holds. \\
	Let $x \in Q_{\nu}^{*}$, and suppose $\delta(x) \leq \eta$. Then $x \in U$, $|\phi(x) - A_{\nu}(x)| \leq C\ep\delta(x)$, and $|\nabla\phi(x) - \nabla A_{\nu}| \leq C\ep$.
\end{lem}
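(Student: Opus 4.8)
The plan is to reduce the claim for a general point $x \in Q_\nu^*$ with $\delta(x) \le \eta$ to the approximation estimates already in hand, by exhibiting a nearby point of $E$ where $\phi$ is pinned down by $A_\nu$, and then using the first-order Taylor estimate of Lemma~\ref{l:1.4} to propagate the bound to $x$ itself. Fix $\eta$ small; by \eqref{eq:2.7} we have $d(x) \le C\eta$, so in particular $d(x) < c_0$ once $\eta$ is small enough, and \eqref{eq:2.10} forces $\delta_\nu \le c_3$, i.e.\ we are in Case~I and the ball $B(z_\nu, r_\nu) \subset E$ of Section~\ref{s:2} is available, together with \eqref{eq:2.14}, \eqref{eq:2.15}, \eqref{eq:2.16}. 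Also $d(x) \le C\eta < \eta'$ guarantees $B(x, \eta') \subset U$, so $x \in U$; this is the easy first assertion.

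Next I would fix the point $z_\nu \in E$ (the center of the Whitney ball). By \eqref{eq:2.16}, $|z_\nu - x| \le C\delta(x) \le C\eta$. Apply Lemma~\ref{l:1.4} with the roles $x \rightsquigarrow z_\nu$ and $y \rightsquigarrow x$: since $d(z_\nu) = 0 \le \eta$ and $|x - z_\nu| \le C\eta \le \eta$ (after shrinking $\eta$), we get
\[
\big| \phi(x) - \phi(z_\nu) - \nabla\phi(z_\nu)\cdot(x - z_\nu) \big| \le \ep\, |x - z_\nu| \le C\ep\,\delta(x).
\]
On the other hand, \eqref{eq:3.1} with $K = 1$ gives $|\phi(z_\nu) - A_\nu(z_\nu)| \le C\ep r_\nu \le C\ep\,\delta(x)$ by \eqref{eq:2.15}. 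Now the key point is to compare the linear parts. Both $\nabla\phi(z_\nu)$ and $\nabla A_\nu$ are (essentially) orthogonal up to $O(\ep)$: Lemma~\ref{l:1.3} gives $(1 - C\ep)I \le (\nabla\phi(z_\nu))^{+}(\nabla\phi(z_\nu)) \le (1 + C\ep)I$, and $A_\nu$ is a Euclidean motion so $\nabla A_\nu \in O(n)$ exactly. To bound $|\nabla\phi(z_\nu) - \nabla A_\nu|$ I would test $\phi$ against the orthonormal directions $e_i$ emanating from $z_\nu$ at a small controlled scale: for a small controlled $t > 0$ the points $z_\nu \pm t e_i$ lie in $B(z_\nu, r_\nu) \subset E$ (shrinking $\eta$ ensures $t \le r_\nu$ is not needed --- rather we use a fixed small multiple of $r_\nu$, and rescale), and combining \eqref{eq:3.1} at scale $K$ a controlled constant with Lemma~\ref{l:1.4} yields $|\nabla\phi(z_\nu) e_i - \nabla A_\nu e_i| \le C\ep$ for each $i$, hence $|\nabla\phi(z_\nu) - \nabla A_\nu| \le C\ep$. (This is exactly the computation underlying Lemma~\ref{l:1.1}, now localized around $z_\nu$ instead of the origin.)

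Granting $|\nabla\phi(z_\nu) - \nabla A_\nu| \le C\ep$, the remaining assertions follow by assembling the pieces. For the value estimate: write
\[
\phi(x) - A_\nu(x) = \big[\phi(x) - \phi(z_\nu) - \nabla\phi(z_\nu)\cdot(x - z_\nu)\big] + \big[\phi(z_\nu) - A_\nu(z_\nu)\big] + \big[\nabla\phi(z_\nu) - \nabla A_\nu\big]\cdot(x - z_\nu),
\]
using that $A_\nu(x) - A_\nu(z_\nu) = \nabla A_\nu \cdot (x - z_\nu)$ since $A_\nu$ is affine; the three bracketed terms are bounded by $C\ep\,\delta(x)$, $C\ep\,\delta(x)$, and $C\ep\,|x - z_\nu| \le C\ep\,\delta(x)$ respectively, giving $|\phi(x) - A_\nu(x)| \le C\ep\,\delta(x)$. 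For the gradient estimate: $|\nabla\phi(x) - \nabla A_\nu| \le |\nabla\phi(x) - \nabla\phi(z_\nu)| + |\nabla\phi(z_\nu) - \nabla A_\nu|$; the first term is $\le \ep$ because $\phi \in C^1(U)$ and $|x - z_\nu| \le C\eta$ is small (uniform continuity of $\nabla\phi$ on a compact neighborhood of $E$, as in the proof of Lemma~\ref{l:1.4}), and the second is $\le C\ep$.

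\textbf{Main obstacle.} The one genuinely non-formal step is establishing $|\nabla\phi(z_\nu) - \nabla A_\nu| \le C\ep$ --- everything else is bookkeeping with the Whitney data and the triangle inequality. This requires re-running the Gram--Schmidt / divided-difference argument of Lemma~\ref{l:1.1} at the base point $z_\nu$ and at the natural scale $r_\nu$ (rather than at $0$ at unit scale), and checking that the relevant test points $z_\nu + t e_i$ stay inside $B(z_\nu, r_\nu) \subset E$ so that \eqref{i:0.1} and \eqref{eq:3.1} both apply; the factor $K$ in \eqref{eq:3.1} stays a controlled constant throughout, so the $\ep$-loss is only a controlled multiple. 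An alternative that avoids redoing this computation: replace $\phi$ by $A_\nu^{-1}\circ\phi$ as in Lemma~\ref{l:1.1} so that $A_\nu$ becomes the identity, apply Lemma~\ref{l:1.1}'s conclusion \eqref{eq:1.3}-style estimate directly to the rescaled ball, and read off both $|\phi(z_\nu) - A_\nu(z_\nu)|$ and $|\nabla\phi(z_\nu) - \nabla A_\nu|$ from that normalization; this is probably the cleanest route and is the one I would write up.
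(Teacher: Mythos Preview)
Your argument is correct, but it takes a different route from the paper's, and the paper's is noticeably cleaner. The key difference is the choice of base point in Lemma~\ref{l:1.4}. You Taylor-expand $\phi$ at $z_\nu$, which forces you to separately establish the gradient comparison $|\nabla\phi(z_\nu) - \nabla A_\nu| \le C\ep$ (your ``main obstacle'') and then, at the end, invoke uniform continuity of $\nabla\phi$ once more to pass from $\nabla\phi(z_\nu)$ to $\nabla\phi(x)$. The paper instead Taylor-expands $\phi$ at $x$ itself: Lemma~\ref{l:1.4} gives $|\phi(y) - [\phi(x) + \nabla\phi(x)\cdot(y-x)]| \le C\ep r_\nu$ for all $y \in B(z_\nu,r_\nu)$, and combining with \eqref{eq:3.1} yields $|A_\nu(y) - [\phi(x) + \nabla\phi(x)\cdot(y-x)]| \le C\ep r_\nu$ on that same ball. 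This difference is an \emph{affine} function of $y$ bounded on a ball of radius $r_\nu$, so Lemma~\ref{l:1.2} immediately delivers both $|\nabla A_\nu - \nabla\phi(x)| \le C\ep$ and (via \eqref{eq:2.14}, evaluating at $y=x$) $|A_\nu(x) - \phi(x)| \le C\ep r_\nu \le C\ep\,\delta(x)$ in one stroke. No separate finite-difference or Gram--Schmidt computation is needed, and no second appeal to uniform continuity.

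Incidentally, your ``main obstacle'' can itself be dispatched by Lemma~\ref{l:1.2} rather than by hand: the affine function $y \mapsto A_\nu(y) - [\phi(z_\nu) + \nabla\phi(z_\nu)(y-z_\nu)]$ is bounded by $C\ep r_\nu$ on $B(z_\nu,r_\nu)$ (combine \eqref{eq:3.1} with Lemma~\ref{l:1.4} at $z_\nu$), so Lemma~\ref{l:1.2} gives $|\nabla A_\nu - \nabla\phi(z_\nu)| \le C\ep$ directly. That observation makes your route and the paper's nearly identical --- the only remaining difference is which point you center the Taylor expansion at, and centering at $x$ saves you the final uniform-continuity step.
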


\begin{proof}
	We have $\beta_{\nu} < C\delta(x) \leq C\eta$ by \eqref{eq:2.1} and \eqref{eq:2.7}. If $\eta$ is small enough, it follows that $\beta_{\nu} < c_{3}$, so $Q_{\nu}$ falls into Case I, and we have
	\beq
		\label{eq:3.20}
		\big| \phi(y) - A_{\nu}(y) \big| \leq C\ep r_{\nu} \text{ for } y \in B(z_{\nu},r_{\nu})
	\eeq
	by \eqref{eq:3.1}. Also, \eqref{eq:2.15}, \eqref{eq:2.16} show that
	\beq
		\label{eq:3.21}
		B(z_{\nu},r_{\nu}) \subset B\big(x, C\delta(x)\big) \subset B(x,C\eta).
	\eeq
	We have
	\beq
		\label{eq:3.22}
		d(x) \leq C\delta(x) \leq C\eta
	\eeq
	by \eqref{eq:2.7}. (In particular, $x \in U$ if $\eta$ is small enough.) If $\eta$ is small enough, then \eqref{eq:3.21}, \eqref{eq:3.22} and Lemma \ref{l:1.4} imply
	\[
		y \in U \text{ and } \Big| \phi(y) - \big[ \phi(x) + \nabla\phi(x) \cdot (y-x) \big] \Big| < \ep \big| y-x \big| \text{ for } y \in B(z_{\nu},r).
	\]
	Hence, by \eqref{eq:3.21} and \eqref{eq:2.15}, we obtain the estimate
	\beq
		\label{eq:3.23}
		\Big| \phi(y) - \big[ \phi(x) + \nabla\phi(x) \cdot (y-x) \big] \Big| \leq C\ep r_{\nu} \text{ for } y \in B(z_{\nu},r_{\nu}).
	\eeq
	Combing \eqref{eq:3.20} with \eqref{eq:3.23}, we find that
	\beq
		\label{eq:3.24}
		\Big| A_{\nu}(y) - \big[ \phi(x) + \nabla\phi(x) \cdot (y-x) \big] \Big| \leq C\ep r_{\nu} \text{ for } y \in B(z,r_{\nu}).
	\eeq
	The function $y \to A_{\nu}(y) -  [ \phi(x) + \nabla\phi(x) \cdot (y-x) ]$ is affine. Hence, estimate \eqref{eq:3.24}, inclusion \eqref{eq:2.14}, and Lemma \ref{l:1.2} together tell us that
	\beq
		\label{eq:3.25}
		\Big| A_{\nu}(y) - \big[ \phi(x) + \nabla\phi(x) \cdot (y-x) \big] \Big| \leq C\ep r_{\nu} \text{ for } y \in Q_{\nu}^{*},
	\eeq
	and
	\beq
		\label{eq:3.26}
		\big| \nabla A_{\nu} - \nabla\phi(x) \big| \leq C\ep.
	\eeq
	Since $x \in Q_{\nu}^{*}$. we learn from \eqref{eq:3.25} and \eqref{eq:2.15} that
	\beq
		\label{eq:3.27}
		\big| A_{\nu}(x) - \phi(x) \big| \leq C\ep\delta(x).
	\eeq
	Estimates \eqref{eq:3.26} and \eqref{eq:3.27} (and an observation that $x \in U$) are the conclusions of Lemma \ref{l:3.2}.
\end{proof}

%%%%%%%%%% Section: A Partition of Unity %%%%%%%%%%
\section{\textsc{A Partition of Unity}} \label{s:4}
\setcounter{thm}{0}
\setcounter{equation}{0}
Our plan is to patch together the map $\phi$ and the Euclidean motion $A_{\nu}$, using a partition of unity on $\bbR^{n}$. Note that the $\Theta_{\nu}$ in Section \ref{s:2} sum to 1 only on $\bbR^{n} \backslash E$.

Let $\eta > 0$ be a small enough number. Let $\chi(t)$ be a $C^{\infty}$ function on $\bbR$, having the following properties.
\beq
	\label{eq:4.1}
	\left\{\begin{array}{rl}
		0 \leq \chi(t) \leq 1 & \text{ for all } t; \\
		\chi(t) = 1 & \text{ for } t \leq \eta; \\
		\chi(t) = 0 & \text{ for } t \geq 2\eta; \\
		|\chi'(t)| \leq C\eta^{-1} & \text{ for all } t.
	\end{array}\right.
\eeq
We define
\beq
	\label{eq:4.2}
	\wt{\Theta}_{in}(x) = \chi\big(\delta(x)\big) \text{ and (for each $\nu$) } \wt{\Theta}_{\nu}(x) = \big( 1 - \wt{\Theta}_{in}(x) \big) \cdot \Theta_{\nu}(x) \text{ for } x \in \bbR^{n}.
\eeq
Thus
\beq
	\label{eq:4.3}
	\wt{\Theta}_{in}, \wt{\Theta}_{\nu} \in C^{\infty}(\bbR^{n}), \qquad \wt{\Theta}_{in} \geq 0 \text{ and } \wt{\Theta}_{\nu} \geq 0 \text{ on } \bbR^{n};
\eeq
\beq
	\label{eq:4.4}
	\wt{\Theta}_{in}(x) = 1 \text{ for } \delta(x) \leq \eta;
\eeq
\beq
	\label{eq:4.5}
	\text{supp}\,\wt{\Theta}_{in} \subset \{ x \in \bbR^{n}: \delta(x) \leq 2\eta \};
\eeq
\beq
	\label{eq:4.6}
	\text{supp}\,\wt{\Theta}_{\nu} \subset Q_{\nu}^{*} \text{ for each } \nu;
\eeq
and
\beq
	\label{eq:4.7}
	\wt{\Theta}_{in} + \sum_{\nu}\wt{\Theta}_{\nu} = 1 \text{ everywhere on } \bbR^{n}.
\eeq
Note that \eqref{eq:2.2} and \eqref{eq:4.6} yield the following.
\beq
	\label{eq:4.8}
	\text{Any given $x \in \bbR^{n}$ belongs to supp$\,\wt{\Theta}_{\nu}$ for at most $C$ distinct $\nu$}.
\eeq
In view of \eqref{eq:4.5}, we have
\beq
	\label{eq:4.9}
	\text{supp}\,\wt{\Theta}_{in} \subset U,
\eeq
if $\eta$ is small enough. This tells us in particular that $\wt{\Theta}_{in}(x)\cdot\phi(x)$ is a well-defined map from $\bbR^{n}$ to $\bbR^{n}$.

We establish the basic estimates for the gradients of $\wt{\Theta}_{in},\,\wt{\Theta}_{\nu}$. By \eqref{eq:4.4}, \eqref{eq:4.5} we have $\nabla\wt{\Theta}_{in}(x) = 0$ unless $\eta < \delta(x) < 2\eta$. For $\eta < \delta(x) < 2\eta$, we have
\[
	\big| \nabla\wt{\Theta}_{in}(x) \big| = \big| \chi'\big( \delta(x) \big) \big| \cdot \big| \nabla\delta(x) \big| \leq C\eta^{-1}
\]
by \eqref{eq:4.1} and \eqref{eq:2.9}. Therefore,
\beq
	\label{eq:4.10}
	\big| \nabla\wt{\Theta}_{in}(x) \big| \leq C(\delta(x))^{-1} \text{ for all } x \in \bbR^{n} \backslash E.
\eeq
We turn our attention to $\nabla\wt{\Theta}_{\nu}(x)$. Recall that $0 \leq \Theta_{\nu}(x) \leq 1$ and $0 \leq \wt{\Theta}_{in}(x) \leq 1$ for all $x \in \bbR^{n}$. Moreover, \eqref{eq:2.1}, \eqref{eq:2.4}, \eqref{eq:2.5} and \eqref{eq:2.7} together yield
\[
	\big| \nabla\Theta_{\nu}(x) \big| \leq C\big( \delta(x) \big)^{-1} \text{ for all } x \in \bbR^{n}\backslash E \text{ and  for all } \nu.
\]

The above remarks \big(including \eqref{eq:4.10}\big), together with the definition \eqref{eq:4.2} of $\wt{\Theta}_{\nu}$, tell us that
\beq
	\label{eq:4.11}
	\big| \nabla\wt{\Theta}_{\nu}(x) \big| \leq C(\delta(x))^{-1} \text{ for } x \in \bbR^{n}\backslash E, \text{ each } \nu.
\eeq

%%%%%%%%%% Section: Extending the Map %%%%%%%%%%
\section{\textsc{Extending the Map}} \label{s:5}
\setcounter{thm}{0}
\setcounter{equation}{0}
We now define
\beq
	\label{eq:5.1}
	\Phi(x) = \wt{\Theta}_{in}(x)\cdot\phi(x) + \sum_{\nu}\wt{\Theta}_{\nu}(x)\cdot A_{\nu}(x) \text{ for all } x \in \bbR^{n}.
\eeq
This makes sense, thanks to \eqref{eq:4.8} and \eqref{eq:4.9}. Moreover, $\Phi: \bbR^{n} \to \bbR^{n}$ is a $C^{1}$-map. We will prove that $\Phi$ satisfies all the conditions \ref{i:0.4} $\cdots$ \ref{i:0.9} of Theorem~\ref{t:0.1}.
%%CK section 10? Or do you mean the result mentioned in the abstract? Section 0 is cited in the hand written notes as well. Intro?

First of all, for $\delta(x) < \eta$, \eqref{eq:4.3}, \eqref{eq:4.4}, \eqref{eq:4.7} give $\wt{\Theta}_{in}(x) = 1$ and all $\wt{\Theta}_{\nu}(x) = 0$; hence \eqref{eq:5.1} gives $\Phi(x) = \phi(x)$. Thus, $\Phi$ satisfies \ref{i:0.5}.

Next suppose $d(x) \geq c_{0}$. Then $\delta(x) > c > 2\eta$ if $\eta$ is small enough; hence $\wt{\Theta}_{in}(x) = 0$ and $\wt{\Theta}_{\nu}(x) = \Theta_{\nu}(x)$ for each $\nu$. (See \eqref{eq:4.3} and \eqref{eq:4.5}.) Also, \eqref{eq:2.10} shows that $\beta_{\nu} > c_{3}$ for all $\nu$ such that $x \in \text{supp}\,\Theta_{\nu}$. For such $\nu$, we have defined $A_{\nu} = A_{\infty}$; see \eqref{eq:3.3}. Hence, in this case,
\[
	\Phi(x) = \sum_{\nu}\Theta_{\nu}(x)\cdot A_{\infty}(x) = A_{\infty}(x),
\]
thanks to \eqref{eq:2.6}. Thus, $\Phi$ satisfies \ref{i:0.6}.

Next, suppose $\phi \in C^{m}(U)$ for some given $m \geq 1$. Then since $\wt{\Theta}_{in}$ and each $\wt{\Theta}_{\nu}$ belong to $C^{\infty}(\bbR^{n})$, we learn from \eqref{eq:4.8}, \eqref{eq:4.9} and \eqref{eq:5.1} that $\Phi: \bbR^{n} \to \bbR^{n}$ is a $C^{m}$ map.

Similarly, if $\phi \in C^{\infty}(U)$, then $\Phi: \bbR^{n} \to \bbR^{n}$ is a $C^{\infty}$ map. Thus, $\Phi$ satisfies \ref{i:0.8} and \ref{i:0.9}.

It remains to show that $\Phi$ satisfies \ref{i:0.4} and \ref{i:0.7}. To establish these assertions, we first control $\nabla\Phi$.

%%% Lemma 5.1 %%%
\begin{lem}
	\label{l:5.1}
	\addcontentsline{toc}{subsection}{Lemma 5.1}
	%%CK what does $x \to \bbR^{n}$ mean? do you mean $x\in \bbR^n$?---corrected
	For all $x \in \bbR^{n}$ such that $\delta(x) \leq 2\eta$, we have
	\[
		\big| \nabla\Phi(x) - \nabla\phi(x) \big| \leq C\ep.
	\]
\end{lem}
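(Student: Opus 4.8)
The plan is to differentiate the defining formula \eqref{eq:5.1} and use the partition-of-unity identity \eqref{eq:4.7} to absorb the factors $|\nabla\wt\Theta_{in}|,\,|\nabla\wt\Theta_\nu|$, which are of size $\delta(x)^{-1}$ and hence singular near $E$. First I would dispose of the range $\delta(x)<\eta$: the remarks just after \eqref{eq:5.1} give $\Phi=\phi$ on the open set $\{\delta<\eta\}$, so there $\nabla\Phi(x)=\nabla\phi(x)$ and the estimate is trivial. It then remains to treat $\eta\le\delta(x)\le 2\eta$. For such $x$ we have $\delta(x)>0$, hence $x\notin E$ by \eqref{eq:2.7}, and (taking $\eta$ small, since $E\subset U$ is compact) $x$ lies in $U$ together with a neighbourhood, so $\nabla\phi(x)$ is defined and everything below is legitimate.

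On a small ball about such an $x$, subtracting the identity $\phi=\big(\wt\Theta_{in}+\sum_\nu\wt\Theta_\nu\big)\phi$ from \eqref{eq:5.1} yields $\Phi-\phi=\sum_\nu\wt\Theta_\nu\cdot(A_\nu-\phi)$, the sum being locally finite on $\bbR^{n}\backslash E$. Differentiating term by term and using the product rule, I would obtain
\[
\big|\nabla\Phi(x)-\nabla\phi(x)\big|\le\sum_\nu\Big[\,\big|\nabla\wt\Theta_\nu(x)\big|\cdot\big|A_\nu(x)-\phi(x)\big|+\wt\Theta_\nu(x)\cdot\big|\nabla A_\nu-\nabla\phi(x)\big|\,\Big],
\]
where only the $\nu$ with $x\in\mathrm{supp}\,\wt\Theta_\nu\subset Q_\nu^*$ contribute, and there are at most $C$ of these by \eqref{eq:4.8}. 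The second term of each summand is immediately $\le C\ep$ by Lemma~\ref{l:3.2} (which gives $|\nabla\phi(x)-\nabla A_\nu|\le C\ep$) together with $0\le\wt\Theta_\nu(x)\le 1$.

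The step I expect to be delicate is the first term of each summand, where $|\nabla\wt\Theta_\nu(x)|\le C\,\delta(x)^{-1}$ by \eqref{eq:4.11} is singular. The resolution is that Lemma~\ref{l:3.2}---applicable since $x\in Q_\nu^*$ and $\delta(x)\le 2\eta$, after shrinking $\eta$ so that Lemma~\ref{l:3.2} covers all points with $\delta(\cdot)\le 2\eta$---also supplies the compensating bound $|A_\nu(x)-\phi(x)|\le C\ep\,\delta(x)$, so the product is $\le C\,\delta(x)^{-1}\cdot C\ep\,\delta(x)=C\ep$. Summing the at most $C$ contributing indices then gives $|\nabla\Phi(x)-\nabla\phi(x)|\le C\ep$, completing the proof. (Alternatively, one can avoid the pointwise bound $|\phi-A_\nu|\le C\ep\,\delta$ by fixing one index $\nu_0$ with $x\in Q_{\nu_0}^*$, writing $A_\nu(x)-\phi(x)=(A_\nu(x)-A_{\nu_0}(x))+(A_{\nu_0}(x)-\phi(x))$, and combining the consistency estimate \eqref{eq:3.5} with $\sum_\nu\nabla\wt\Theta_\nu(x)=-\nabla\wt\Theta_{in}(x)$; the route via Lemma~\ref{l:3.2} above is shorter.)
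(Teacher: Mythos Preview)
Your proof is correct and follows essentially the same approach as the paper: both reduce to $\eta\le\delta(x)\le 2\eta$, subtract the partition-of-unity identity to get $\Phi-\phi=\sum_\nu\wt\Theta_\nu(A_\nu-\phi)$, differentiate, and then bound the resulting terms using Lemma~\ref{l:3.2} for $|A_\nu(x)-\phi(x)|\le C\ep\,\delta(x)$ and $|\nabla A_\nu-\nabla\phi(x)|\le C\ep$, together with \eqref{eq:4.11} and \eqref{eq:4.8}. Your remark about shrinking $\eta$ so that Lemma~\ref{l:3.2} applies on $\{\delta\le 2\eta\}$ matches the paper's ``if $\eta$ is small enough'' clause.
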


\begin{proof}
	We may assume $\delta(x) \geq \eta$, since otherwise we have $|\nabla\Phi(x) - \nabla\phi(x)| = 0$ by \ref{i:0.5}. For $\delta(\ul{x}) \leq 3\eta$, we have $\ul{x} \in U$, and \eqref{eq:5.1} gives
	\beq
		\label{eq:5.2}
		\Phi(\ul{x}) - \phi(\ul{x}) = \sum_{\nu}\wt{\Theta}_{\nu}(\ul{x})\big[ A_{\nu}(\ul{x}) - \phi(\ul{x}) \big],
	\eeq
	since $\phi(\ul{x}) = \wt{\Theta}_{in}(\ul{x})\phi(\ul{x}) + \sum_{\nu}\wt{\Theta}_{\nu}(\ul{x})\phi(\ul{x})$. If $	\delta(x) \leq 2\eta$, then \eqref{eq:5.2} holds on a neighborhood of $x$; hence
	\beq
		\label{eq:5.3}
		\nabla\Phi(x) - \nabla\phi(x) = \sum_{\nu}\nabla\wt{\Theta}_{\nu}(x) \cdot \big[ A_{\nu}(x) - \phi(x) \big] + \sum_{\nu}\wt{\Theta}_{\nu}(x)\cdot \big[ \nabla A_{\nu} - \nabla\phi(x) \big].
	\eeq
	There are at most $C$ nonzero terms on the right in \eqref{eq:5.3}, thanks to \eqref{eq:4.8}. Moreover, if $\eta$ is small enough, then Lemma \ref{l:3.2} and \eqref{eq:4.6} show that $|A_{\nu}(x) - \phi(x)| \leq C\ep\delta(x)$ and $|\nabla A_{\nu} - \nabla\phi(x)| \leq C\ep$ whenever supp$\,\wt{\Theta}_{\nu} \ni x$. Also, for each $\nu$, we have $0 \leq \wt{\Theta}_{\nu}(x) \leq 1$ by \eqref{eq:4.3} and \eqref{eq:4.7}; and $|\nabla\wt{\Theta}_{\nu}(x)| \leq C\cdot(\delta(x))^{-1}$, by \eqref{eq:4.11}. Putting these estimates into \eqref{eq:5.3}, we obtain the conclusion of Lemma \ref{l:5.1}.
\end{proof}

%%% Lemma 5.2 %%%
\begin{lem}
	\label{l:5.2}
	\addcontentsline{toc}{subsection}{Lemma 5.2}
	Let $x \in Q_{\mu}^{*}$, and suppose $\delta(x) > 2\eta$. Then
	\[
		\big| \nabla\Phi(x) - \nabla A_{\mu} \big| \leq C\ep.
	\]
\end{lem}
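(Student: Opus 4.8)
The plan is to mimic the proof of Lemma \ref{l:5.1}, working now in the region $\delta(x) > 2\eta$ where $\wt{\Theta}_{in}$ vanishes identically, so that only the Euclidean-motion terms survive. First I would note that for $\ul{x}$ in a neighborhood of $x$ we have $\wt{\Theta}_{in}(\ul{x}) = 0$ by \eqref{eq:4.5}, hence $\wt{\Theta}_{\nu}(\ul{x}) = \Theta_{\nu}(\ul{x})$ there and $\sum_{\nu}\wt{\Theta}_{\nu}(\ul{x}) = 1$ by \eqref{eq:4.7}. Thus \eqref{eq:5.1} becomes $\Phi(\ul{x}) = \sum_{\nu}\wt{\Theta}_{\nu}(\ul{x})A_{\nu}(\ul{x})$, and since $A_{\mu}$ is affine we may write $A_{\mu}(\ul{x}) = \sum_{\nu}\wt{\Theta}_{\nu}(\ul{x})A_{\mu}(\ul{x})$ on that neighborhood, giving
\[
	\Phi(\ul{x}) - A_{\mu}(\ul{x}) = \sum_{\nu}\wt{\Theta}_{\nu}(\ul{x})\big[ A_{\nu}(\ul{x}) - A_{\mu}(\ul{x}) \big].
\]

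Next I would differentiate this identity at $x$, obtaining
\[
	\nabla\Phi(x) - \nabla A_{\mu} = \sum_{\nu}\nabla\wt{\Theta}_{\nu}(x)\cdot\big[ A_{\nu}(x) - A_{\mu}(x) \big] + \sum_{\nu}\wt{\Theta}_{\nu}(x)\cdot\big[ \nabla A_{\nu} - \nabla A_{\mu} \big].
\]
The second sum is controlled immediately: each term is bounded by $C\ep$ via the consistency estimate \eqref{eq:3.6} of Lemma \ref{l:3.1} (applicable since $x \in Q_{\mu}^{*} \cap Q_{\nu}^{*}$ whenever $\wt{\Theta}_{\nu}(x) \neq 0$, by \eqref{eq:4.6}), there are at most $C$ nonzero terms by \eqref{eq:4.8}, and $0 \leq \wt{\Theta}_{\nu}(x) \leq 1$. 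For the first sum, I would use \eqref{eq:3.5} of Lemma \ref{l:3.1} to bound $|A_{\nu}(x) - A_{\mu}(x)| \leq C\ep\delta(x)$, and \eqref{eq:4.11} to bound $|\nabla\wt{\Theta}_{\nu}(x)| \leq C(\delta(x))^{-1}$; the product is $\leq C\ep$, and again there are at most $C$ nonzero terms. Adding the two contributions yields $|\nabla\Phi(x) - \nabla A_{\mu}| \leq C\ep$, as desired.

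The one point requiring a small amount of care is the justification that $\sum_{\nu}\wt{\Theta}_{\nu} \equiv 1$ on a full neighborhood of $x$ (not merely at $x$), so that the differentiation of the identity $A_{\mu}(\ul{x}) = \sum_{\nu}\wt{\Theta}_{\nu}(\ul{x})A_{\mu}(\ul{x})$ is legitimate; this follows because $\delta$ is continuous and $\delta(x) > 2\eta$, so $\delta(\ul{x}) > 2\eta$ for $\ul{x}$ near $x$, whence $\wt{\Theta}_{in}(\ul{x}) = 0$ there by \eqref{eq:4.5}. I do not anticipate any genuine obstacle here: the lemma is a routine variant of Lemma \ref{l:5.1}, with the roles of $\phi$ and $A_{\mu}$ interchanged and the estimates of Lemma \ref{l:3.2} replaced by those of Lemma \ref{l:3.1}. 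The mild bookkeeping nuisance is simply keeping track of which $\nu$ contribute (only those with $x \in Q_{\nu}^{*}$) and confirming that for all such $\nu$ the cube $Q_{\nu}$ may be large or small but Lemma \ref{l:3.1} applies regardless.
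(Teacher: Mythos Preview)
Your proposal is correct and follows essentially the same approach as the paper: derive the identity \eqref{eq:5.4}, then estimate each sum using Lemma~\ref{l:3.1}, the gradient bound on the cutoff functions, and the bounded-overlap property. The only cosmetic difference is that the paper switches immediately from $\wt{\Theta}_{\nu}$ to $\Theta_{\nu}$ (since they coincide when $\delta(x)>2\eta$) and writes the gradient identity directly rather than first writing an identity for $\Phi(\ul{x})-A_{\mu}(\ul{x})$ and then differentiating; the content is identical.
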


\begin{proof}
	Since $\delta(x) > 2\eta$, we have $\wt{\Theta}_{in}(x) = 0$, $\nabla\wt{\Theta}_{in} = 0$, and $\wt{\Theta}_{\nu}(x) = \Theta_{\nu}(x)$, $\nabla\wt{\Theta}_{\nu}(x) = \nabla\Theta_{\nu}(x)$ for all $\nu$; see \eqref{eq:4.5} and \eqref{eq:4.3}. Hence, \eqref{eq:5.1} yields
	\[
		\nabla\Phi(x) = \sum_{\nu} \nabla\Theta_{\nu}(x)A_{\nu}(x) + \sum_{\nu}\Theta_{\nu}(x)\nabla A_{\nu}.
	\]
	Since also
	\[
		\nabla A_{\mu} = \sum_{\nu}\nabla\Theta_{\nu}(x)A_{\mu}(x) + \sum_{\nu}\Theta_{\nu}(x)\nabla A_{\mu},
	\]
	\big(as $\sum_{\nu}\nabla\Theta_{\nu}(x) = 0$, $\sum_{\nu}\Theta_{\nu}(x) = 1\big)$, we have
\beq
	\label{eq:5.4}
	\nabla\Phi(x) - \nabla A_{\mu} = \sum_{\nu} \nabla\Theta_{\nu}(x) \cdot \big[ A_{\nu}(x) - A_{\mu}(x) \big] + \sum_{\nu} \Theta_{\nu}(x) \cdot \big[ \nabla A_{\nu} - \nabla A_{\mu} \big].
\eeq
There are at most $C$ nonzero terms on the right in \eqref{eq:5.4}, thanks to \eqref{eq:4.8}. By \eqref{eq:2.1}, \eqref{eq:2.4}, \eqref{eq:2.5} and \eqref{eq:2.7}, we have $|\nabla\Theta_{\nu}(x)| \leq C(\delta(x))^{-1}$; and \eqref{eq:2.3}, \eqref{eq:2.6} yield $0 \leq \Theta_{\nu}(x) \leq 1$. Moreover, whenever $Q_{\nu}^{*} \ni x$, Lemma \ref{l:3.1} gives $|A_{\nu}(x) - A_{\mu}(x)| \leq C\ep\delta(x)$, and $|\nabla A_{\mu} - \nabla A_{\nu}| \leq C\ep$. When $Q_{\nu}^{*} \notniFromTxfonts x$, we have $\Theta_{\nu}(x) = 0$ and $\nabla\Theta_{\nu}(x) = 0$, by \eqref{eq:2.4}. Using the above remarks to estimate the right-hand side of \eqref{eq:5.4}, we obtain the conclusion of Lemma \ref{l:5.2}.
\end{proof}

Using Lemma \ref{l:5.1} and \ref{l:5.2}, we can show that
\beq
	\label{eq:5.5}
	(1-C\ep)I \leq \big( \nabla\Phi(x) \big)^{+}\big( \nabla\Phi(x) \big) \leq (1+C\ep)I \text{ for all } x \in \bbR^{n}.
\eeq
Indeed, if $\delta(x) \leq 2\eta$, then \eqref{eq:5.5} follows from Lemma \ref{l:1.3} and \ref{l:5.1}. If instead $\delta(x) > 2\eta$, then $x \in \bbR^{n} \backslash E$, hence $x \in Q_{\mu}$ for some $\mu$. Estimate \eqref{eq:5.5} then follows from Lemma \ref{l:5.2}, since $(\nabla A_{\mu})^{+}(\nabla A_{\mu}) = I$ for the Euclidean motion $A_{\mu}$. Thus, \eqref{eq:5.5} holds in all cases.

From \eqref{eq:5.5} and \ref{i:0.6}, together with Lemma \ref{l:1.5}, we see that
\beq
	\label{eq:5.6}
	\text{$\Phi:\bbR^{n} \to \bbR^{n}$ is one-to-one and onto, hence $\Phi^{-1}:\bbR^{n} \to \bbR^{n}$ is a $C^{1}$ diffeomorphism},
\eeq
by \eqref{eq:5.5}. Thus $\Phi$ satisfies \ref{i:0.7}. It remains only to prove \ref{i:0.4}.

To do so, we use \eqref{eq:5.5} and \eqref{eq:5.6} as follows. Let $x,y \in \bbR^{n}$. Then $|x-y|$ is the minimum of length($\Gamma$) over all $C^{1}$ curves $\Gamma$ joining $x$ to $y$. Also, by \eqref{eq:5.6}, $|\Phi(x) - \Phi(y)|$ is the infimum of length\big($\Phi(\Gamma)$\big) over all $C^{1}$ curves $\Gamma$ joining $x$ to $y$. For each $\Gamma$, \eqref{eq:5.5} yields
\[
	(1-C\ep) \,\text{length}\,(\Gamma) \leq \,\text{length}\,\big(\Phi(\Gamma)\big) \leq (1+C\ep) \,\text{length}\,(\Gamma).
\]
Taking the minimum over all $\Gamma$, we conclude that $\Phi$ satisfies \ref{i:0.4}, completing the proof of our theorem. \hfill $\blacksquare$

%\begin{thm}
%\label{t:}
%\end{thm}
%
%\section{}
%\setcounter{equation}{0}
%
%\begin{exm}
%\label{e:}
%\end{exm}
%
%\begin{lem}
%\label{l:}
%\end{lem}
%
%\begin{cor}
%\label{c:}
%\end{cor}
%
%\begin{rmk}
%\label{r:}
%\end{rmk}

\noindent
{\bf Acknowledgment:} Support from the Department of Mathematics at Princeton, the National Science Foundation, Mathematical Reviews, The Department of Defense and the University of the Witwatersrand are gratefully acknowledged.

\end{document}